\newcommand{\bB}{\ensuremath{\mathbb{B}}}
\newcommand{\bN}{\ensuremath{\mathbb{N}}}
\newcommand{\bP}{\ensuremath{\mathbb{P}}}
\newcommand{\bR}{\ensuremath{\mathbb{R}}}
\newcommand{\bZ}{\ensuremath{\mathbb{Z}}}
\newcommand{\ind}{\ensuremath{\mathbbm{1}}}
\newcommand{\cB}{\ensuremath{\mathcal{B}}}
\newcommand{\cF}{\ensuremath{\mathcal{F}}}
\newcommand{\cS}{\ensuremath{\mathcal{S}}}
\newcommand{\md}{\mathrm{d}}
\newcommand{\norm}[1]{\left\Vert \, #1 \, \right\Vert}
\newcommand{\ddx}[1][1]{\ifnum#1=1 \frac{d}{dx} \else \frac{d^{#1}}{dx^{#1}} \fi}
\newcommand{\ddy}[1][1]{\ifnum#1=1 \frac{d}{dy} \else \frac{d^{#1}}{dy^{#1}} \fi}
\newcommand{\ddt}[1][1]{\ifnum#1=1 \frac{d}{dt} \else \frac{d^{#1}}{dt^{#1}} \fi}
\theoremstyle{plain}
\newtheorem{thm}{Theorem}[section]  
\newtheorem{prop}[thm]{Proposition}
\newtheorem{cor}[thm]{Corollary}
\newtheorem{lem}[thm]{Lemma}
\newtheorem{defn}{Definition}[section]
\theoremstyle{definition}
\theoremstyle{remark}
\newtheorem{rem}{Remark}[section]
\numberwithin{equation}{section}
\DeclareMathOperator{\nat}{\mathbb{N}}
\newcommand{\hP}{\widehat{\bP}}
\title{One-sided continuity properties for \\the Schonmann projection} 
\author{ Stein Andreas Bethuelsen\footnote{Email: stein.bethuelsen@tum.de}  \quad and \quad  Diana Conache\footnote{Email: diana.conache@tum.de} \\
Technische Universit\"at M\"unchen}
\begin{document}
\maketitle

\begin{abstract}
We consider the plus-phase of the two-dimensional Ising model below the critical temperature. 
In $1989$ Schonmann proved that the projection of this measure onto a one-dimensional line is not a Gibbs measure. After many years of continued research which have revealed further properties of this measure, the question whether or not it is a Gibbs measure in an almost sure sense remains open.

In this paper we study  the same measure by interpreting it as a temporal process. One of our main results is that the Schonmann projection is almost surely a regular $g$-measure. That is, it does possess the corresponding \emph{one-sided} notion of almost Gibbsianness. We further deduce strong one-sided mixing properties which are of independent interest. 
Our proofs make use of classical coupling techniques and some monotonicity properties which are known to hold for one-sided, but not two-sided conditioning for FKG measures. 
\medskip

\emph{MSC2010.} Primary 60K35; Secondary 60E15, 60K37 \\
\emph{Key words and phrases.} Ising model, Schonmanns projection, $g$-measure, $\phi$-mixing.
\medskip
\end{abstract}

\section{Introduction and outline of this paper}

Measures defined on an infinite product space of the form $A^{\bZ}$ are the object of study for two well-developed theories. From one point of view, measures on $A^{\bZ}$ can be regarded as so-called $g$-measures (\citet{Keane1972}), also known in the literature under various other names, such as chains with complete connections (\citet{OnicescuMihoc35}), chains of infinite order (\citet{harris1955}), uniform martingales (\citet{Kalikow1990}) or stochastic chains with unbounded memory (\citet{GALLESCO2018689}). They can be interpreted as discrete-time processes on $\bZ$ which depend on their entire history. In this sense, they can be regarded as a generalization of Markov chains. The second point of view concerns the theory of one-dimensional Gibbs measures  (\citet{Dobrushin1968, LanfordRuelle69, Georgii2011}), which refers to random fields in a spatial setting determined by a system of distributions on finite regions conditioned on given boundary conditions, called a \emph{specification}. Under sufficient regularity assumptions (\citet{FernandezMaillardCCCandGibbs2004}) the two theories coincide. However, it was recently shown (\citet{FernandezGalloMaillard2011, BissacotEndovanEnterLeNy17}) that neither one includes the other. 

In this paper, we are concerned with the \emph{Schonmann projection}, which is the projection of the plus phase of the two-dimensional Ising model in the low temperature regime onto a line, i.e.\ $\bZ\times\{0\}$, as studied in \citet{SchonmannNGibbs1989}. We will mainly be concerned with the former perspective and will regard the Schonmann projection as a temporal process. However, some comparison with the spatial fields setting will be useful. 
 
The study of projections of Gibbs measures to a sublayer has various motivations as presented in \citet{MaesRedigMoffaertNGibbs1999}. One of them is that they serve as examples of  non-Gibbsian states. An important question in the field of mathematical physics is whether the transformation of a Gibbs measure remains a Gibbs measure. As was discovered in the late 1970's this is not always the case, not even for relative simple transformations such as averaging or decimation. (See \citet{EnterFernandezSokal1993} for a thorough discussion of this phenomena). In this sense, the projection of the Ising model onto a line was introduced in \cite{SchonmannNGibbs1989} as a natural example of a transformed Gibbs measure which is non-Gibbsian, being one of the first studied models exhibiting such pathological behavior. Kozlov's characterization of Gibbsianness tells us that for a measure to be Gibbs, one needs its specification to satisfy certain non-nullness and continuity properties. The Schonmann projection does not satisfy the latter, as it was very elegantly proved in \citet{FernandezPfisterNGibbs1997}. In the context of Dobrushin's program of restoration of Gibbsianness, it was later proved that this measure is \emph{weakly Gibbs} (\citet{DobrushinShlosman1997, MaesVandeVelde1997, MaesRedigMoffaertNGibbs1999}). Several attempts were made to improve this statement, in showing that the Schonmann projection is an \emph{almost Gibbs measure}, namely that the set of continuity points of the specification has full measure. To the best of our knowledge this is still an open problem. (For a comparison between these two weaker notions of Gibbsianness see \cite{MaesRedigvonMoffaert99}).

In analogy to the description of Gibbs measures by Kozlov, a similar characterization can be given for \emph{regular} $g$-measures. We prove in this paper that the Schonmann projection is a regular $g$-measure in an almost sure sense, see Theorem \ref{thm Schonmann $g$-measure}. We do this by showing that the Schonmann projection is consistent with the $g$-functions obtained when conditioning on having ``all minuses'' or ``all pluses'' in the far past. This fact is proved by a stochastic domination argument, which can be of independent interest, see Proposition \ref{prop domi}. Moreover, we establish mixing properties, which could be interpreted as the lack of longitudinal wetting transition for our model, see Theorem \ref{thm Ising is g}. Further details on our motivation for studying the Schonmann projection from the one-sided point of the view can be found in Section \ref{sec discussion}.

\subsection*{Outline of this paper}
In the next section we  recall basic properties of the Ising model on $\bZ^2$, define the Schonmann projection and give a brief description of Gibbs measures and $g$-measures. Our main results are contained in Section \ref{sec main} together with a discussions of applications and some open questions. Proofs are postponed until Section \ref{sec proofs}.

\section{Definitions and preliminaries}
\subsection{The $2$d Ising model}\label{sec Ising}

In this subsection we give the definition of the Ising model on $\bZ^2$ and present some of its basic properties. For a more extended treatment of  the Ising model, and as a reference, see \citet[Chapter 3]{VelenikFriedliSM2017}). 

Let $\tilde{\Omega}:= \{-1,+1\}^{\bZ^2}$ and denote by $\tilde{\cF}$ the corresponding product $\sigma$-algebra. For $\Lambda \subset \bZ^2$, let $\tilde{\Omega}_{\Lambda} := \{-1,+1\}^{\Lambda}$ and let $\tilde{\cF}_{\Lambda} \subset \tilde{\cF}$ be the sub-$\sigma$-algebra which concentrates on events on $\tilde{\Omega}_{\Lambda}$. By $\omega_{\Lambda}\sigma_{\Lambda^c}$ we denote the element in $\tilde{\Omega}$ which equals $\omega_{\Lambda}$ on $\Lambda$ and $\sigma$ on $\Lambda^c$. 
Let $\tilde{\mathcal{S}}$ be the set of all finite subsets of $\mathbb{Z}^2$.
Then, for $\Lambda \in \tilde{\cS}$, the \emph{finite-volume Ising model} on $\Lambda$  with \emph{boundary condition}  $\sigma\in \tilde{\Omega}$, \emph{inverse temperature} $\beta \in (0,\infty)$ and \emph{magnetic field} $h \in \mathbb{R}$, denoted here by $\mu_{\beta,h}^{\Lambda,\sigma}$, is the probability measure on $(\tilde{\Omega}_{\Lambda}, \tilde{\cF}_{\Lambda})$ which, for $\omega_{\Lambda} \in \tilde{\Omega}_{\Lambda}$, is given by
\begin{align}
\mu_{\beta,h}^{\Lambda,\sigma}(\{\omega_{\Lambda}\}) := \frac{1}{Z_{\beta,h}^{\Lambda,\sigma}} e^{-H_{\beta,h}^{\Lambda,\sigma}(\omega_{\Lambda} )}, \quad \text{ where }Z_{\beta,h}^{\Lambda,\sigma} := \sum_{\omega_{\Lambda} \in \tilde{\Omega}_{\Lambda}} e^{-H_{\beta,h}^{\Lambda,\sigma}(\omega_{\Lambda})},
\end{align}
and where the \emph{Hamiltonian} $H_{\beta,h}^{\Lambda,\sigma} \colon \tilde{\Omega}_{\Lambda} \rightarrow \bR$ is given by
\begin{align}
H_{\beta,h}^{\Lambda,\sigma} ( \omega_{\Lambda} ) := -\beta \left[ \sum_{\{x,y\} \subset \Lambda  \atop x\sim y} \omega_x\omega_y + \sum_{x\in \Lambda, y\notin \Lambda \atop x\sim y} \omega_x \sigma_y \right]+ h \sum_{x \in \Lambda} \omega_x.
\end{align}
Here, $x \sim y$ is the notation for $x$ and $y$ being nearest-neighbors. 

The Ising model on $\bZ^2$ with parameters $(\beta,h)$ is characterized by the set of probability measures  $\mu_{\beta,h}$ on $(\tilde{\Omega},\tilde{\cF})$ satisfying the so-called DLR conditions, that is, for each $\Lambda \in \tilde{\cS}$,
\begin{align}\label{eq DLR Ising}
\mu_{\beta,h}(B) = \int \mu_{\beta,h}^{\Lambda,\sigma} (B) \mu_{\beta,h}(\md\sigma), \quad B \in  \tilde{\cF}_{\Lambda}.
\end{align}
When there is no risk of confusion, we denote by $+$ and $-$ the configurations in $\Omega$ such that $\pm_x=\pm1$ for each $x\in \bZ^2$.
It is well known that, for each $\beta \in (0,\infty)$ and $h \in \mathbb{R}$, the so-called $plus$-phase $\mu_{\beta,h}^+$ and $minus$-phase $\mu_{\beta,h}^-$, defined by
\begin{align}\label{def plus Ising}
\mu_{\beta,h}^{[-n,n]^2,\pm}(\cdot) \implies \mu_{\beta,h}^{\pm}(\cdot) \quad \text{ as } n \rightarrow \infty,
\end{align} 
are well-defined and satisfy \eqref{eq DLR Ising}. Here $\implies$ denotes weak convergence. 

By the Aizenman-Higuchi theorem, any measure satisfying \eqref{eq DLR Ising} is a convex combination of $\mu_{\beta,h}^+$ and $\mu_{\beta,h}^-$. Moreover, for $h\neq 0$ we have that $\mu_{\beta,h}^+=\mu_{\beta,h}^-$, whereas for $h=0$ there is a \emph{critical value} $\beta_c  = \ln(1+\sqrt{2})/2$ such that $\mu_{\beta,0}^+ = \mu_{\beta,0}^-$ for all $\beta \leq \beta_c$ and $\mu_{\beta,0}^+ \neq \mu_{\beta,0}^-$ for all $\beta>\beta_c$. When $\beta>\beta_c$ and $h=0$ we say that the Ising model on $\bZ^2$ is in the phase transition regime,  and when $h\neq0$ or when $h=0$ and $\beta<\beta_c$ it is in the uniqueness regime. The Ising model with $\beta=\beta_c$ and $h=0$ is the critical case.

\subsection{The Schonmann projection}

We start by introducing some more notation. In the sequel,  $A$ is a finite state space (or alphabet)  which we equip with the discrete $\sigma$-algebra $\mathcal{A}$ formed of all subsets of $A$. 
Further, we consider the configuration space $\Omega=A^{\bZ}$ endowed with the product topology and with the corresponding product $\sigma$-algebra $\cF$. Similarly, for $n\in \bZ$, we consider $\Omega_{>n}=A^{\bZ_{n,+}}$ and $\Omega_{<n}=A^{\bZ_{n,-}}$ equipped with $\sigma$-algebras $\cF_{>n}$ and $\cF_{<n}$, where $\bZ_{n,+} = \bZ \cap [n, \infty)$ and $\bZ_{n,-} = \bZ \cap (-\infty, n]$, respectively.  
Given $\Lambda\subset\mathbb{Z}$, we denote by $\omega_{\Lambda}$ the restriction of a configuration $\omega\in\Omega$ to the volume $\Lambda$  
and denote the corresponding configuration space by $\Omega_{\Lambda}$.  
An important notation which will be used throughout this paper for elements in $\Omega_{[i,j]}$, $i,j\in\bZ\cup\{\pm\infty\}$ with $i\leq j$, is $\omega_{i}^{j}=\omega_i\omega_{i+1}\ldots\omega_{j}$. 
Moreover, we write $\mathcal{S}$ for the set of all finite subsets of $\mathbb{Z}$.

The focus of this paper is the projection of the plus-phase of the Ising model onto $\bZ \times \{0\}$, also referred to as the Schonmann projection.  
That is, by taking $A=\{-1,+1\}$, we consider the probability measure $\nu_{\beta,h}^{+}$ on $(\Omega,\cF)$ which is characterized by
\begin{equation}
\nu_{\beta,h}^{+}(\{ \omega \colon \omega_{\Lambda} = \eta_{\Lambda}\}) :=  \mu_{\beta,h}^{+}(\{\tilde{\omega} \colon  \tilde{\omega}_{\Lambda \times \{0\}} = \eta_{\Lambda} \}) 
\end{equation} 
for each $\Lambda \subset \cS$ and $\eta \in \Omega$, and where $\mu_{\beta,h}^{+}$ was introduced in \eqref{def plus Ising}. Similarly, the projection of $\mu_{\beta,h}^-$ onto $\bZ\times\{0\}$ is denoted by $\nu_{\beta,h}^-$. One important property of these measures which we will use several times in the sequel is that they are \emph{translation-invariant}, that is,
\begin{equation}
\nu_{\beta,h}^{\pm}(\{ \omega \colon \omega_{\Lambda} = \eta_{\Lambda}\})
= \nu_{\beta,h}^{\pm}(\{ \omega \colon \omega_{T^{-1}\Lambda} = \eta_{T^{-1}\Lambda}\})
\end{equation}
for all $\Lambda \in \cS$, where $T^{-1}\Lambda:= \{ x \in \bZ \colon x+1 \in \Lambda \}$.

\subsection{Two-sided conditioning. The class of 1-dimensional DLR Gibbs measures}

In the so-called DLR (Dobrushin, Lanford and Ruelle) approach of Mathematical Statistical Mechanics, Gibbs measures at infinite volume are probability measures defined by a system of conditional probabilities, conditioned on configurations outside finite sets $\Lambda$, called \emph{specification}, in the same manner as the definition of the Ising model via a Hamiltonian as given in Section \ref{sec Ising}.  We refer to \cite[Chaper 6]{VelenikFriedliSM2017} for precise definitions of this program. 

Instead we present here an equivalent characterization of \emph{translation-invariant} Gibbs measures which is closer to our description of $g$-measures in the following sections. This characterization was given and proven in \citet{BerghoutFernandezVerbitskiy2017} and is the content of the following proposition.

\begin{prop}[Theorem 3.1 in \cite{BerghoutFernandezVerbitskiy2017}]\label{prop equiv characterization Gibbs}
Let $\mu$ be a translation-invariant measure on $\Omega$ with underlying process 
$(X_i)_{i\in\bZ}$. Then $\mu$ is a Gibbs measure if and only if it there is a continuous function  $\gamma:\Omega\to (0,1)$ satisfying
\begin{equation}\label{eq Gibbs normalization}
\sum_{\omega_0\in A} \gamma(\omega_{-\infty}^{-1}, \omega_0,\omega_{1}^{\infty})=1, \textrm{ for all }\omega\in\Omega
\end{equation}
and for which we have that
\begin{align}\label{eq: equiv characterization Gibbs}
\mu(X_0=\omega_0\vert X_{-\infty}^{-1}=\omega_{-\infty}^{-1}, X_{1}^{\infty}=\omega_{1}^{\infty})=\gamma(\omega_{-\infty}^{-1}, \omega_0,\omega_{1}^{\infty}), 
\end{align}
for $\mu$-a.a. $\omega\in\Omega$.
\end{prop}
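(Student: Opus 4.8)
\noindent The plan is to establish the two implications separately, using the DLR description of a Gibbs measure as one admitting a quasilocal (continuous), uniformly non-null specification $(\gamma_\Lambda)_{\Lambda\in\cS}$, where each $\gamma_\Lambda(\,\cdot\mid\omega_{\Lambda^c})$ is a version of the conditional probability $\mu(X_\Lambda\in\cdot\mid\cF_{\Lambda^c})$ and $\cF_{\Lambda^c}:=\sigma(X_i:i\notin\Lambda)$. The bulk of the work lies in the converse direction, where a full specification must be reconstructed from the single-site datum $\gamma$.

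For the direction ``$\mu$ Gibbs $\Rightarrow\gamma$ exists'', I would take $\Lambda=\set0$ and set $\gamma(\omega):=\gamma_{\set0}(\omega_0\mid\omega_{\bZ\setminus\set0})$. Continuity of $\gamma$ is precisely quasilocality of $\gamma_{\set0}$, the bound $\gamma\in(0,1)$ is uniform non-nullness, and \eqref{eq Gibbs normalization} holds because $\gamma_{\set0}(\,\cdot\mid\omega_{\set0^c})$ is a probability measure on $A$. Since $\sigma(X_{-\infty}^{-1},X_1^\infty)=\cF_{\set0^c}$, the defining property of the specification yields \eqref{eq: equiv characterization Gibbs} for $\mu$-a.a.\ $\omega$.

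For the converse, given $\gamma$ as in the statement, I would first use translation invariance of $\mu$ to obtain single-site kernels at every site: writing $\tau$ for the left shift $(\tau\omega)_i=\omega_{i+1}$, the function $\gamma_{\set i}(\omega_i\mid\omega_{\bZ\setminus\set i}):=\gamma(\tau^{i}\omega)$ is a continuous, strictly positive, normalized version of $\mu(X_i\in\cdot\mid\cF_{\set i^c})$. The core step is a telescoping argument: for a finite interval $\Lambda=[a,b]$ and configurations $\xi,\zeta$ that agree off $\Lambda$, I would join $\zeta_\Lambda$ to $\xi_\Lambda$ by a sequence of single-site flips and express
\begin{equation}
\frac{\mu(X_\Lambda=\xi_\Lambda\mid\cF_{\Lambda^c})}{\mu(X_\Lambda=\zeta_\Lambda\mid\cF_{\Lambda^c})}
\end{equation}
as the corresponding product of single-site ratios $\gamma_{\set i}(\cdot\mid\cdot)/\gamma_{\set i}(\cdot\mid\cdot)$. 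Each factor is continuous and bounded away from $0$ and $\infty$, so the product is a continuous, strictly positive function of the exterior configuration.

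Normalizing these ratios over $\xi_\Lambda\in\Omega_\Lambda$ produces candidate kernels $\gamma_\Lambda(\xi_\Lambda\mid\omega_{\Lambda^c})$ which are continuous in $\omega$, non-null, normalized, and which agree $\mu$-a.e.\ with $\mu(X_\Lambda\in\cdot\mid\cF_{\Lambda^c})$. It then remains to verify that $(\gamma_\Lambda)_{\Lambda\in\cS}$ is a consistent specification; consistency holds $\mu$-a.e.\ because conditional expectations tower, and continuity together with non-nullness (which makes the support full and pins down the unique continuous version) upgrades the a.e.\ identities to identities on all of $\Omega$, exhibiting $\mu$ as a Gibbs measure. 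The main obstacle is exactly this bookkeeping: one must check that the telescoping product does not depend on the order in which the coordinates of $\Lambda$ are flipped (a cocycle condition, automatic for the genuine $\mu$-conditionals and hence inherited by their continuous versions) and that the assembled family satisfies the full consistency relations $\int\gamma_\Lambda(\,\cdot\mid\eta)\,\gamma_\Delta(\md\eta\mid\omega)=\gamma_\Delta(\,\cdot\mid\omega)$ for $\Lambda\subset\Delta$, everywhere rather than merely almost everywhere.
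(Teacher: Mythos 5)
The paper does not actually prove this proposition: it is imported verbatim as Theorem 3.1 of \citet{BerghoutFernandezVerbitskiy2017}, so there is no in-paper argument to compare against; your proposal must be measured against that reference. Your reconstruction is essentially the argument given there (and the classical single-site-to-specification machinery in Georgii's book): take the one-point kernel of the specification for the easy direction, and for the converse rebuild the whole specification from telescoped single-site ratios, using that the cocycle/order-independence and consistency identities hold $\mu$-a.e.\ for the genuine conditionals and then everywhere by continuity. The outline is sound; the one step worth writing out explicitly is the full-support claim you use implicitly at every a.e.-to-everywhere upgrade --- since $\gamma$ is continuous on the compact space $\Omega$ with values in $(0,1)$, it is bounded below by some $\delta>0$, and iterating the bound site by site gives $\mu$ of every cylinder on an interval of length $m$ at least $\delta^{m}$, so the full-measure sets on which your identities hold are dense and equality of continuous functions extends to all of $\Omega$.
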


Following Proposition \ref{prop equiv characterization Gibbs}, we next introduce the concept of \emph{almost Gibbsian} measures.

\begin{defn}
Let $\mu$ be a translation-invariant measure on $\Omega$ with underlying process 
$(X_i)_{i\in\bZ}$. Then $\mu$ is \emph{almost Gibbs} if and only if there is a function  $\gamma:\Omega\to (0,1)$ satisfying \eqref{eq Gibbs normalization} which is continuous for $\mu$-a.a.\ $\omega\in\Omega$, and for which we have that \eqref{eq: equiv characterization Gibbs} is satisfied for $\mu$-a.a.\ $\omega\in\Omega$.
\end{defn}

\subsection{One-sided conditioning. The class of $g$-measures}

We think of the class of $g$-measures as a general description of stochastic processes with long-range memory. Similarly to Gibbs measures, which can be seen as a generalization of Markov random fields, these processes can be seen as an analogous generalization of Markov chains. We shall restrict to such processes on $\Omega$ which are translation-invariant.

\begin{defn}
Let $\mu$ be a translation-invariant measure on $\Omega$ with underlying process 
$(X_i)_{i\in\bZ}$. Then $\mu$ is a \emph{regular $g$-measure} if and only if there is a continuous function $P \colon A \times\Omega_{<0}\to (0,1)$ satisfying
\begin{equation}\label{eq normalization}
\sum_{\omega_0\in A}P\bigl(\omega_0 \bigm| \omega_{-\infty}^{-1}\bigr)\;=\;1
\quad\forall\omega_{-\infty}^{-1}\in\Omega_{<0}.
\end{equation}
and for which we have that
\begin{equation}\label{eq consistency}
\mu\bigl(X_{0}=\omega_{0} \bigm| X_{-\infty}^{-1}=\omega_{-\infty}^{-1}\bigr)
\;=\;P\bigl(\omega_{0} \bigm| \omega_{-\infty}^{-1}\bigr)
\end{equation}
for all $\omega_{0}\in A$ and $\mu$-a.e.\ $\omega_{-\infty}^{-1}\in\Omega_{<0}$. 
\end{defn}

We remark here the similarity of this definition and the equivalent characterization of Gibbs measures stated in Proposition \ref{prop equiv characterization Gibbs}. In some sense, one could say that the function $\gamma$ appearing in Equation \refeq{eq: equiv characterization Gibbs} is \emph{a two-sided $g$-function}.

In more generality, a $g$-measure is a measure $\mu$ on $\Omega$ satisfying \eqref{eq consistency} with respect to a \emph{ $g$-function} $P \colon A \times\Omega_{<0}\to [0,1]$ satisfying \eqref{eq normalization}. Such a $g$-function is said to be \emph{regular} if it is continuous and maps all its elements to $(0,1)$. To be more precise, the  $g$-function $P$ is continuous if
\begin{equation}
var_k(P) := \sup_{\omega_{-\infty}^{-1}, \sigma_{-\infty}^{-1} \in \Omega<0,  \atop \omega_{-k}^{-1} = \sigma_{-k}^{-1}}  \sup_{a \in A} | P(a \mid \omega_{-1}^{-\infty}) - P(a \mid \sigma_{-1}^{-\infty}) | \rightarrow 0 \text{ as } k \rightarrow \infty.
\end{equation}

Similar to the notion of almost Gibbs, we now introduce almost regular $g$-measures:

\begin{defn}
Let $\mu$ be a translation-invariant measure on $\Omega$ with underlying process 
$(X_i)_{i\in\bZ}$. Then $\mu$ is an \emph{almost regular $g$-measure} if and only if there is a function $P \colon A \times\Omega_{<0}\to (0,1)$ satisfying \eqref{eq normalization} which is continuous $\mu$-a.s.\ and for which $\mu$ is a $g$-measure.
\end{defn}

\begin{rem}
Most of the literature on $g$-measures assumes a priori regularity of the $g$-function, although this assumption has been relaxed in some recent publications, see e.g.\  \citet{GalloPaccaut2013}.
\end{rem}

\section{Main results and discussion}\label{sec main}

\subsection{Main results}

In this subsection we consider the measure $\nu_{\beta,h}^+$ from the one-sided point of view in the sense of $g$-measures. 
We first consider the regime of parameters in which the two-dimensional Ising model is unique, for which we obtain analogous statements as for the two-sided case.

\begin{prop}\label{prop unique}
In the uniqueness regime $\nu_{\beta,h}^+$  is a regular $g$-measure for a certain $g$-function $P$. Moreover,
 $\sum_{k \geq 1} var_k(P) < \infty,$ 
and hence $\nu_{\beta,h}^+$  is the unique $g$-measure consistent with $P$.
\end{prop}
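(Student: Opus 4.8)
The plan is to construct the $g$-function directly from the two-dimensional model and to read off its regularity and summable variation from the exponential decay of correlations that holds throughout the uniqueness regime.

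First I would define the candidate $g$-function by finite-volume approximation. For $n\ge 1$ set
\[
P_n\bigl(\omega_0 \mid \omega_{-n}^{-1}\bigr) := \nu_{\beta,h}^+\bigl(X_0 = \omega_0 \mid X_{-n}^{-1} = \omega_{-n}^{-1}\bigr),
\]
regarded as a function on $A\times\Omega_{<0}$ that depends only on the coordinates in $[-n,-1]$, and put $P(\omega_0\mid\omega_{-\infty}^{-1}):=\lim_{n\to\infty}P_n(\omega_0\mid\omega_{-n}^{-1})$. Each $P_n$ sums to $1$ over $\omega_0\in A$, so the normalization \eqref{eq normalization} is inherited by the limit; existence of the limit as a \emph{uniform} limit, and hence continuity of $P$, will follow from the variation estimate below, after which martingale convergence identifies $P$ with a version of the one-sided conditional probability \eqref{eq consistency}. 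Non-nullness, i.e.\ $P$ taking values in $(\delta,1-\delta)$ for some $\delta>0$, I would obtain from the finite-energy property of the Ising model: the conditional $\mu_{\beta,h}^+$-probability that the spin at $(0,0)$ equals $+1$ given all other spins lies in a fixed interval depending only on $\beta$, and averaging over the unconditioned coordinates preserves this bound for $P$.

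The heart of the argument is the bound on $var_k(P)$. Fixing two pasts $\omega,\sigma\in\Omega_{<0}$ that agree on $[-k,-1]$, I would compare $\nu_{\beta,h}^+(X_0=+1\mid X_{-\infty}^{-1}=\omega)$ and $\nu_{\beta,h}^+(X_0=+1\mid X_{-\infty}^{-1}=\sigma)$ by lifting both to $\bZ^2$: each is a limiting conditional $\mu_{\beta,h}^+$-probability of the event at the origin given the line spins $\{\sigma_{(j,0)}:j\le -1\}$ prescribed by $\omega$, respectively by $\sigma$. Since $\omega$ and $\sigma$ differ only at sites $(j,0)$ with $j\le -k-1$, all disagreements sit at lattice distance at least $k+1$ from the origin. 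In the uniqueness regime the model has exponential decay of correlations and, more strongly, satisfies a strong-mixing estimate controlling the effect of altering the conditioning on a far-away set, which yields $|P(+1\mid\omega)-P(+1\mid\sigma)|\le Ce^{-ck}$. I would make this precise by a coupling / disagreement-percolation argument, coupling the two conditioned fields so that they agree off the cluster of sites joined to the disagreement region by a path of disagreement and bounding by $Ce^{-ck}$ the probability that such a path reaches the origin. This gives $var_k(P)\le Ce^{-ck}$.

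Summing this geometric bound yields $\sum_{k\ge 1}var_k(P)<\infty$ at once, and it retroactively justifies the uniform convergence of $P_n$ and the continuity of $P$ asserted above. Regularity of $P$ (continuity together with non-nullness) plus summable variation then places us in the setting of the classical uniqueness theorem for $g$-measures: a regular $g$-function with summable variation admits a unique consistent translation-invariant $g$-measure, so $\nu_{\beta,h}^+$ is the unique $g$-measure consistent with $P$. The step I expect to be the main obstacle is the variation estimate, specifically transferring two-dimensional decay of correlations into a bound on a conditional probability whose conditioning is on a \emph{sparse} one-dimensional set (the half-line) rather than on a full boundary; since conditioning on the line is not a standard DLR specification, the cleanest route is a coupling argument valid uniformly over the boundary conditions of the approximating boxes, which requires the full strength of strong mixing in the uniqueness regime rather than mere two-point exponential decay.
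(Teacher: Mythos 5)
Your overall architecture parallels the paper's proof: define $P$ by finite-volume conditioning, lift the one-sided conditioning to $\bZ^2$, establish a uniform exponential bound $var_k(P)\le Ce^{-ck}$, and conclude regularity plus uniqueness from summable variation. The normalization, non-nullness and martingale-identification steps are fine. The genuine gap is in the one step you yourself flag as the crux. You propose to prove the variation estimate by a disagreement-percolation coupling, but disagreement percolation (van den Berg--Maes) gives exponential decay of the disagreement cluster only under Dobrushin's uniqueness condition, i.e.\ at high temperature or strong field; it does \emph{not} cover the entire uniqueness regime of the two-dimensional Ising model (for instance $\beta$ large with small $h\neq 0$, where uniqueness holds but Dobrushin's condition fails). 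More generally, ``uniqueness implies strong mixing uniformly in the boundary condition'' is not a soft fact obtainable by a generic coupling: for the $2$d Ising model it is a deep theorem, namely the Schonmann--Shlosman result \cite{SchonmannSchlosman1995} that the model is completely analytic throughout the uniqueness regime. The paper's proof rests exactly on this input, in the form of Condition IIIc of \cite{DobrushinShlosmanCAGF1985}: flipping the boundary condition at a single site $x\notin\Lambda$ changes $\mu_{\beta,h}^{\Lambda,\cdot}(\eta_{(0,0)}=+1)$ by at most $Ce^{-c\norm{x}_1}$, uniformly over all finite volumes $\Lambda$ and boundary conditions. Without citing this (or an equivalent) theorem, your claimed bound $var_k(P)\le Ce^{-ck}$ is unproved on a substantial part of the parameter range the proposition covers.

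There is a second, related subtlety that makes the ``all volumes'' form of the estimate essential: the volume relevant to the lifted conditioning is $\Lambda_n=[-n,n]^2\setminus([-n,-1]\times\{0\})$, a box with a slit removed, with the slit spins prescribed by the past. Strong mixing for regular boxes does not in general imply strong mixing for such irregular volumes, whereas complete analyticity is precisely the statement uniform over arbitrary finite volumes and boundary conditions. Once you have it, the paper closes the argument by FKG monotonicity (reducing arbitrary continuations of the past beyond $-k$ to the two extreme all-plus and all-minus continuations, so only $\pm$ boundary conditions need comparing) and by telescoping single-site changes along $\partial\Lambda_n$, giving a bound of order $n\,e^{-cn}$ and hence equality of the two extremal $g$-functions $P^{+,+}_{\beta,h}=P^{+,-}_{\beta,h}$ together with exponential decay of $var_k$. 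If you replace your disagreement-percolation step by the citation to \cite{SchonmannSchlosman1995} and this telescoping, your proof closes and is then essentially the paper's; as written, the key estimate is supported only in the Dobrushin sub-regime.
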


Proposition \ref{prop unique} is reminiscent of the fact that the Ising model on $\bZ^2$, in the entire uniqueness regime, belongs to the class of Gibbs models having \emph{complete analyticity}, as proven in \citet{SchonmannSchlosman1995}. Consequently, as follows by standard arguments, in the uniqueness phase, the measure $\nu_{\beta,h}^{+}$ can be considered as both a Gibbs measure and a $g$-measure. (The former was proven in \citet{LorincziProjectedGibbs1995}, see also \cite[Theorem 4.4]{MaesRedigMoffaertNGibbs1999}). As \cite{SchonmannNGibbs1989} proved,  this is no longer the case in the phase transition regime for which $\nu_{\beta,h}^+$ is not a Gibbs measure. 
From this result more can be said about the interpretation of  $\nu_{\beta,h}^{+}$ as a $g$-measure. In particular, it  cannot satisfy such strong continuity properties as in Proposition \ref{prop unique}, as we state next.

\begin{prop}\label{prop not summable}
Let $h=0$, $\beta>\beta_c$ and assume there is a $g$-function $P$ such that $\nu_{\beta,h}^+$ is a $g$-measure consistent with it. Then
 $\sum_{k \geq 1} var_k(P) = \infty.$ 
\end{prop}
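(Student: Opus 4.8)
The plan is to argue by contradiction. Assume that $\nu_{\beta,h}^+$ is consistent with a $g$-function $P$ for which $\sum_{k\geq 1}var_k(P)<\infty$, and deduce that $\nu_{\beta,h}^+$ would then be a (two-sided) Gibbs measure. Since $h=0$ and $\beta>\beta_c$, this contradicts the non-Gibbsianness of the Schonmann projection established in \cite{SchonmannNGibbs1989} (and the essential discontinuity of its two-sided conditional probabilities in \cite{FernandezPfisterNGibbs1997}). Thus the whole point is that summable variation of the \emph{one-sided} kernel is incompatible with the discontinuity of the \emph{two-sided} kernel in the phase-transition regime.

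First I would reconstruct the two-sided single-site kernel from $P$. Using the consistency relation \eqref{eq consistency}, translation invariance, and the chain rule, the joint conditional law of $X_0^n$ given the past factorizes into a product of $P$-values, so that for $\nu_{\beta,h}^+$-a.e.\ configuration and every finite $n$,
\begin{equation}
\nu_{\beta,h}^+\bigl(X_0=a \bigm| X_{-\infty}^{-1}=\omega_{-\infty}^{-1},\, X_1^n=\omega_1^n\bigr)
=\frac{P\bigl(a \bigm| \omega_{-\infty}^{-1}\bigr)\prod_{i=1}^n P\bigl(\omega_i \bigm| \omega_{-\infty}^{-1}a\,\omega_1^{i-1}\bigr)}{\sum_{b\in A}P\bigl(b \bigm| \omega_{-\infty}^{-1}\bigr)\prod_{i=1}^n P\bigl(\omega_i \bigm| \omega_{-\infty}^{-1}b\,\omega_1^{i-1}\bigr)}.
\end{equation}
Factoring out a fixed reference symbol, the ratios $R_n(a,b)=\prod_{i=1}^n P(\omega_i\mid\omega_{-\infty}^{-1}a\,\omega_1^{i-1})/P(\omega_i\mid\omega_{-\infty}^{-1}b\,\omega_1^{i-1})$ govern the expression. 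The two arguments of the $i$-th factor agree on the $i-1$ most recent coordinates (sites $1,\dots,i-1$) and differ only at site $0$; hence, using that $P$ is bounded away from $0$, the $i$-th term of $\log R_n$ is bounded by a constant multiple of $var_{i-1}(P)$. Since $\sum_k var_k(P)<\infty$, the series $\log R_n$ converges absolutely and uniformly in $\omega$. As the conditioning $\sigma$-fields $\sigma(X_{-\infty}^{-1},X_1^n)$ increase to $\sigma(X_{\neq 0})$, Lévy's upward theorem gives that the left-hand side converges $\nu_{\beta,h}^+$-a.e., and the limit is a kernel $\gamma(\omega_{-\infty}^{-1},a,\omega_1^{\infty})$ agreeing $\nu_{\beta,h}^+$-a.e.\ with the two-sided conditional probability of $X_0$.

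The decisive point is that this limiting kernel is continuous on \emph{all} of $\Omega$, not merely $\nu_{\beta,h}^+$-a.e.: the uniform control of $\log R_n$ by the tail $\sum_{i\geq k}var_i(P)$ shows that $\gamma$ has summable variation and is therefore (uniformly) continuous on the compact space $\Omega$. The normalization \eqref{eq Gibbs normalization} is inherited from \eqref{eq normalization}, and uniform non-nullness $\gamma\colon\Omega\to(0,1)$ follows by combining the finite-energy character of the Ising model—which gives conditional probabilities bounded away from $0$ and $1$, hence $P\geq\delta>0$ $\nu_{\beta,h}^+$-a.e.—with the fact that $\supp\nu_{\beta,h}^+=\Omega$ and the continuity just established. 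By Proposition \ref{prop equiv characterization Gibbs} (cf.\ also \cite{FernandezMaillardCCCandGibbs2004, BerghoutFernandezVerbitskiy2017}) this makes $\nu_{\beta,h}^+$ a Gibbs measure, contradicting \cite{SchonmannNGibbs1989}. Therefore $\sum_{k\geq 1}var_k(P)=\infty$.

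The main obstacle is precisely the upgrade from an almost-everywhere identity to genuine everywhere-continuity of $\gamma$: the consistency relation \eqref{eq consistency} holds only off a $\nu_{\beta,h}^+$-null set, and the essential discontinuity detected in \cite{FernandezPfisterNGibbs1997} is supported on exactly such null sets, so one must ensure that the summable-variation estimate produces a bona fide continuous version of the two-sided kernel valid at \emph{every} configuration. A secondary technical point is verifying the uniform non-nullness of $\gamma$ (bounding it away from $0$ and $1$), which requires pairing the a.e.\ positivity inherited from $\nu_{\beta,h}^+$ with the continuity and the full-support property.
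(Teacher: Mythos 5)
Your proof is correct, and it reaches the conclusion by the same overall contradiction as the paper---summable variation would force $\nu_{\beta,0}^+$ to be a Gibbs measure, contradicting \cite{SchonmannNGibbs1989}---but the middle of the argument is genuinely different: the paper's proof is purely citational, while yours is self-contained. The paper invokes \cite[Theorem 4.12 and Remark 4.13]{FernandezMaillardCCCandGibbs2004}, which say that regular $g$-functions with summable variation have the ``good future'' property and that $g$-measures consistent with such $g$-functions are Gibbs for a continuous specification. You instead re-derive exactly this implication from scratch: the Bayes/chain-rule factorization of $\nu_{\beta,0}^+(X_0=a\mid X_{-\infty}^{-1},X_1^n)$, the telescoping bound on $\log R_n$ by $\sum_i var_{i-1}(P)$ (valid once $P$ is bounded below), uniform convergence of the everywhere-defined continuous finite-volume kernels to a continuous two-sided kernel $\gamma$, and the identification of $\gamma$ with the two-sided conditional probability via martingale convergence; this is in substance the proof of the Fern\'andez--Maillard theorem specialized to summable variation, combined with the paper's Proposition \ref{prop equiv characterization Gibbs}. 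What your route buys: it makes explicit a point the paper glosses over, namely that the proposition's hypothesis does not assume $P$ regular, whereas the cited theorem is stated for regular $g$-functions; your upgrade---summable variation yields continuity, and the finite-energy property of the Ising model together with the full support of $\nu_{\beta,0}^+$ promotes the $\nu_{\beta,0}^+$-a.e.\ bounds $P\in[\delta,1-\delta]$ to everywhere bounds---closes that gap cleanly, and your final remark correctly locates the danger (the essential discontinuities of \cite{FernandezPfisterNGibbs1997} live on null sets, which is why everywhere-continuity of $\gamma$ must come from the uniform estimate rather than from \eqref{eq consistency}). What the citation buys: brevity, and strictly more generality, since the good-future class of \cite{FernandezMaillardCCCandGibbs2004} properly contains the summable-variation class. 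One small point worth spelling out in your write-up: continuity of $\gamma$ in the \emph{past} coordinates follows from the same estimates, since altering the past beyond distance $m$ perturbs the $i$-th factor by at most $var_{m+i}(P)$, whose sum over $i$ is a tail of the convergent series; this is implicit in your uniform-convergence claim but deserves a sentence.
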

Proposition \ref{prop not summable} follows by combining the results in
\cite{FernandezMaillardCCCandGibbs2004} on equivalence of $g$-measures and Gibbs measures with the fact that $\nu_{\beta,h}^+$ is not a Gibbs measure in the phase transition regime. An important step in the proof in \cite{SchonmannNGibbs1989} of that $\nu_{\beta,h}^+$ is not a Gibbs measure in the phase transition regime, see \cite[Lemma 1]{SchonmannNGibbs1989}, is the following statement: 

for $\beta>\beta_c$, and for each $n \in \bN$ there exists $N(n)\geq n$ such that
\begin{equation}\label{eq two sided nomixing}
\lim_{n \rightarrow \infty} \mu_{\beta,0}^+( \cdot \mid \eta = -1 \text{ on } [(-N(n),-n)\cup (n,N(n)] \times \{0\} ) = \mu_{\beta,0}^-(\cdot).
\end{equation}
We now state our main result, which shows that the one-sided version of Equation \eqref{eq two sided nomixing} exhibits a very different behavior.

\begin{thm}\label{thm Schonmann $g$-measure}
Let $h =0$ and $\beta \geq \beta_c$.
\begin{description}
\item[a)] There exists a $g$-function $P$ to which $\nu_{\beta,0}^+$ is the \textbf{unique} consistent measure. Furthermore, $P$  is \textbf{almost regular} with respect to $\nu_{\beta,0}^+$.
 \item[b)] If $\beta>\beta_c$ we have that $\nu_{\beta,0}^+$ is \textbf{$\phi$-mixing}, that is,
\begin{align}\label{eq first wetting}
\lim_{n \rightarrow \infty} \sup_{B \in \cF_{> n}} \sup_{A \in \cF_{<0}} | \nu_{\beta,0}^+(B \mid A) - \nu_{\beta,0}^+(B)| =0.
\end{align}
\end{description}
\end{thm}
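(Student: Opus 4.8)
The plan is to analyze the one-sided conditional probabilities of $\nu_{\beta,0}^+$ by translating them into a conditioning of the two-dimensional plus-phase $\mu_{\beta,0}^+$: prescribing $X_{-\infty}^{-1}=\omega_{-\infty}^{-1}$ for the projected process amounts to conditioning $\mu_{\beta,0}^+$ on the spins along $(-\infty,-1]\times\{0\}$. The decisive input is that, by the FKG inequality, conditioning an Ising measure on the spins in a region yields again an Ising-type measure whose law is \emph{monotone} in the conditioned configuration, and that for a one-sided conditioning this monotonicity descends to the projected process. Consequently, for every finite history $\omega_{-k}^{-1}$ the true conditional law is sandwiched between the two extremal choices obtained by prescribing all-plus, respectively all-minus, on the remaining far past $(-\infty,-k-1]\times\{0\}$. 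I would package this sandwiching as the stochastic domination of Proposition \ref{prop domi} and use it as the common engine for both parts.

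For part a), I would introduce the finite-memory kernels
\[
Q^{\pm}\bigl(\omega_0\bigm|\omega_{-k}^{-1}\bigr):=\nu_{\beta,0}^{+}\bigl(X_0=\omega_0\bigm| X_{-k}^{-1}=\omega_{-k}^{-1},\ X_{-\infty}^{-k-1}=\pm\bigr),
\]
which are continuous cylinder functions, and pass to the monotone limits $P^{\pm}:=\lim_{k}Q^{\pm}$; these exist by one-sided monotonicity, with $P^+$ upper semicontinuous, $P^-$ lower semicontinuous, and $P^-(+1\mid\cdot)\leq P^+(+1\mid\cdot)$. Both are strictly positive by finite energy, satisfy the normalization \eqref{eq normalization}, and by the sandwiching any version of the one-sided conditional probability of $\nu_{\beta,0}^+$ lies between them; I would take $P:=P^+$ as the $g$-function. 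Since $P^+$ and $P^-$ agree exactly where $P$ is continuous, almost-regularity reduces to proving
\[
\nu_{\beta,0}^+\bigl(\{\omega:\ P^+(\cdot\mid\omega)=P^-(\cdot\mid\omega)\}\bigr)=1,
\]
i.e.\ that the boundary condition prescribed infinitely far in the past fails to influence $X_0$ for $\nu_{\beta,0}^+$-typical histories. For uniqueness I would treat $\beta=\beta_c$ directly, using that the Gibbs state is then unique, and deduce the phase-transition case from the loss of memory furnished by part b), coupling any two measures consistent with $P$ through their common kernels.

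For part b), I would reduce $\phi$-mixing to a two-configuration estimate. For increasing $B\in\cF_{>n}$ and any $A\in\cF_{<0}$, writing $\nu_{\beta,0}^+(B\mid A)$ as an average of $\nu_{\beta,0}^+(B\mid X_{-\infty}^{-1}=\omega)$ over $\omega\in A$ and invoking one-sided monotonicity yields
\[
\bigl|\nu_{\beta,0}^+(B\mid A)-\nu_{\beta,0}^+(B)\bigr|\ \leq\ \nu_{\beta,0}^+\bigl(B\bigm| X_{-\infty}^{-1}=+\bigr)-\nu_{\beta,0}^+\bigl(B\bigm| X_{-\infty}^{-1}=-\bigr),
\]
so it suffices to bound the total-variation distance on $\cF_{>n}$ between the two extremal conditional measures, namely the projections of $\mu_{\beta,0}^+$ conditioned on all-plus, respectively all-minus, along $(-\infty,-1]\times\{0\}$. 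I would couple these through the monotone (FKG) coupling, producing an ordered pair $\sigma^-\leq\sigma^+$, and reduce the claim to showing that the disagreement cluster emanating from the conditioned half-line almost surely does not reach the positive axis arbitrarily far to the right.

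The main obstacle is exactly this localization estimate — the absence of \emph{longitudinal wetting}. In the two-sided situation of \eqref{eq two sided nomixing} a minus annulus \emph{surrounds} the origin and shields it from the plus boundary at infinity, driving the interior to the minus-phase; the whole difficulty is to show that a single \emph{half-line} of minus cannot do this. I would exploit precisely the semi-infinite geometry: because the prescribed minus spins occupy only $(-\infty,-1]\times\{0\}$, any far-right site $(m,0)$ remains exposed to the plus-phase from the right and from both transverse directions, so a disagreement joining the seed to $(m,0)$ must follow a long separating contour whose cost grows with $m$. Combining such a contour bound with the monotone coupling and the domination of Proposition \ref{prop domi} should confine the disagreement cluster to a $\nu_{\beta,0}^+$-almost surely finite neighbourhood of the negative axis, which simultaneously yields the far-future agreement needed for part b) and the $\nu_{\beta,0}^+$-a.e.\ coincidence $P^+=P^-$ needed for the almost-regularity in part a).
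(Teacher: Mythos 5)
Your overall architecture matches the paper's (the extremal kernels $P^{\pm}$ obtained by prescribing all-plus/all-minus in the far past, the strong FKG sandwich of any conditioning between the two extremal conditionings, Strassen/monotone coupling, and reduction of $\phi$-mixing to a disagreement estimate between the two extremal conditional measures). But there is a genuine gap exactly where you locate the ``main obstacle'': your proposed mechanism for the no-longitudinal-wetting estimate is a Peierls-type heuristic (``a disagreement joining the seed to $(m,0)$ must follow a long separating contour whose cost grows with $m$''), and such energy-versus-entropy contour bounds only close at low temperature. The theorem claims all $\beta\geq\beta_c$ (resp.\ $\beta>\beta_c$ for part b)), and the paper itself remarks that Peierls suffices only for large $\beta$. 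To cover the whole regime the paper uses two non-obvious inputs that your proposal does not supply. First, for the qualitative statement underlying part a) (that conditioning on minus along $(-\infty,-n]\times\{0\}$ still converges to $\mu_{\beta,0}^{+}$, Equation \eqref{eq local limit}), it proves Proposition \ref{prop domi} via the Edwards--Sokal coupling and random-cluster \emph{duality} (exponential decay of dual connectivities, Lemma \ref{lem exponential decay}), yielding a strict gap $\theta_-<\theta_+$ between the exponential rates of long minus and plus runs; combined with the Liggett--Steif characterization of domination by Bernoulli product measures and the Aizenman--Higuchi theorem, this forces the mixture coefficient of $\mu^-$ in the limit to vanish. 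Note also that you mischaracterize Proposition \ref{prop domi}: it is not the FKG ``sandwiching'' of conditional laws (that is plain strong FKG), but precisely the tool that rules out a $\mu_{\beta,0}^-$ component, for which your contour bound is no substitute at moderate $\beta$.

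Second, for the quantitative mixing in part b), ``exposure to the plus phase from the right'' must be made rigorous, and the paper does this through Lemma \ref{lem second AP}: under the minus-half-line conditioning there is, with positive probability, an \emph{infinite plus path from $(n,0)$ confined to the half-plane $\{x\leq n\}$}, proved by adapting Georgii--H\"aggstr\"om--Maes percolation criteria with a reflection argument, and upgraded to probability one via tail triviality of the conditioned measure (Corollary \ref{cor agreement}). This plus ``shield,'' the Markov property, and the exponential relaxation estimate \eqref{eq convergence to plus estimates} (summed over the cone $C_{\theta,n}$ with $\theta<\hat\theta$) give the disagreement bound; none of these steps is a consequence of your asserted localization claim, and importantly Lemma \ref{lem second AP} itself \emph{uses} \eqref{eq local limit}, i.e.\ part a)'s key limit. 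This makes your proposed logical order problematic as well: you derive the uniqueness in part a) from the loss of memory in part b), whereas in the paper part b) rests on part a); in your plan both parts rest on the single unproven localization estimate, so the proposal as written reduces the theorem to an assertion that is, for general $\beta>\beta_c$, essentially as hard as the theorem itself.
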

The one-sided mixing property in Equation \eqref{eq first wetting} should be compared with the two-sided property given in Equation \eqref{eq two sided nomixing} (see also Theorem \ref{thm Ising is g} below). Further, an immediate consequence of the $\phi$-mixing property, by applying \cite[Theorem 1]{GALLESCO2018689}, is the following:
\begin{cor}
Let $h =0$ and $\beta > \beta_c$ and let $P$ be as in Theorem \ref{thm Schonmann $g$-measure} a). Then, for every $\sigma_{-\infty}^{-1},\tau_{-\infty}^{-1}\in\Omega_{<0}$, we have that
 \begin{align}\label{eq weak square summable}
\sum_{k =0}^{\infty} \sum_{a=\pm 1} \bigg|P\Big(a\big|\sigma_{-\infty}^{-(k+1)}\omega_{-k}^{-1}\Big)-P\Big(a\big|\tau_{-\infty}^{-(k+1)}\omega_{-k}^{-1}\Big)\bigg|^2< \infty
\end{align}
for $\nu_{\beta,0}^+\text{-a.e. } \omega_{-\infty}^{-1} \in \Omega_{<0}$.
\end{cor}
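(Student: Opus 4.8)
The statement is meant to be read off from the two conclusions of Theorem~\ref{thm Schonmann $g$-measure} via \cite[Theorem 1]{GALLESCO2018689}, so my plan is to set up the instantiation of that result and then spend the effort on checking that our setting really meets its hypotheses. Recall that \cite[Theorem 1]{GALLESCO2018689} concerns a translation-invariant process on $\Omega=A^{\bZ}$ that is $\phi$-mixing in the sense of \eqref{eq first wetting} and admits a $g$-function representation $P$, and asserts that its one-sided kernels display exactly the weak square-summability \eqref{eq weak square summable}: for any two prescribed remote pasts $\sigma_{-\infty}^{-1}$ and $\tau_{-\infty}^{-1}$, the accumulated squared sensitivity of $P(\,\cdot\mid\cdot\,)$ to the remote past, measured along the lengthening common recent past $\omega_{-k}^{-1}$, is finite for $\nu_{\beta,0}^+$-almost every $\omega_{-\infty}^{-1}$. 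The plan is therefore to apply that theorem to the pair $\mu=\nu_{\beta,0}^+$ and the $g$-function $P$ produced by part a).

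Concretely I would proceed in three short steps. First, record that $\nu_{\beta,0}^+$ is translation-invariant and that, by part a), it is the unique measure consistent in the sense of \eqref{eq consistency} with a $g$-function $P\colon A\times\Omega_{<0}\to(0,1)$; in particular $P$ already takes values in the open interval $(0,1)$ and is continuous at $\nu_{\beta,0}^+$-almost every point. Second, invoke part b): since $\beta>\beta_c$, the measure $\nu_{\beta,0}^+$ is $\phi$-mixing as in \eqref{eq first wetting}. Third, feed $(\nu_{\beta,0}^+,P)$ into \cite[Theorem 1]{GALLESCO2018689}; its conclusion is verbatim \eqref{eq weak square summable}, and the corollary is proved.

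The only genuine work, and the place where I expect the real obstacle, is reconciling the regularity assumptions. The cited theorem is naturally stated for a \emph{regular} $g$-function, that is, one continuous everywhere and uniformly bounded away from $0$ and $1$, whereas here $P$ is only \emph{almost} regular, its continuity holding merely on a set of full $\nu_{\beta,0}^+$-measure. To bridge this I would restrict all manipulations to the full-measure set of continuity points of $P$, which is harmless since the conclusion \eqref{eq weak square summable} is itself an almost-sure statement in $\omega_{-\infty}^{-1}$; and where uniform positivity is required, I would appeal to the finite-energy property of the underlying two-dimensional Ising model, which confines its single-site conditionals to a compact subinterval of $(0,1)$ depending only on $\beta$, a bound preserved under the conditional expectation that produces $P$. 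A final, purely bookkeeping, point is to align the mixing coefficient of \eqref{eq first wetting} with the past/future coefficient used in \cite{GALLESCO2018689}: by translation-invariance the supremum over $B\in\cF_{>n}$ and $A\in\cF_{<0}$ agrees with their coefficient at lag $n$, so nothing is lost in the translation.
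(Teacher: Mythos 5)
Your proposal is correct and follows the paper's own route exactly: the paper obtains the corollary as an immediate consequence of Theorem \ref{thm Schonmann $g$-measure} a) and b) by applying \cite[Theorem 1]{GALLESCO2018689}, precisely the instantiation you describe. Your additional diligence about reconciling almost regularity with the hypotheses of the cited theorem (restricting to the full-measure continuity set, uniform non-nullness via finite energy) is sensible but goes beyond the paper, which treats the application as immediate.
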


We leave the question whether $\nu_{\beta,0}^{+}$ is a regular $g$-measure open, but note that, as seen by Proposition \ref{prop not summable}, in any case it has relatively strong dependence on its past. Interestingly, by adapting the methods in \cite{MaesRedigMoffaertNGibbs1999} and \citet{LorincziVelde1994}, 
we do obtain strong regularity properties for certain decimations of $\nu_{\beta,0}^+$. To be more precise, for $l,k \in \bN$, let $\nu_{l,k}^+$  be the projection of $\nu_{\beta,h}^+$ onto  $\bZ_{l,k}:=\{x \in \bZ \colon x \mod k+l = i, i =0,\dots, l-1\} \subset \bZ$ consisting of all translations of the interval $[0,l-1]$ which are separated by gaps of length $k$.

\begin{prop}\label{prop decimation}
For any $l \in \bN$ there is a $K=K(l)$ such that,  for all $k\geq K$ and all $\beta \geq \beta(l,k)$ large, the measure $\nu_{l,k}^+$ is a regular $g$-measure with summable variation. 
\end{prop}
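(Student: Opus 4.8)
The plan is to construct an explicit one-sided $g$-function for $\nu_{l,k}^+$, after identifying the kept lattice $\bZ_{l,k}$ with $\bZ$ through its canonical order isomorphism, and to prove by a low-temperature coupling argument that its variation decays geometrically once $k$ and $\beta$ are large enough. I would work throughout in the underlying two-dimensional picture: writing $B_0:=[0,l-1]\times\{0\}$ for the current kept block and $B_{-1},B_{-2},\dots$ for the successive kept blocks to its left, the definition of the projection gives that the conditional law under $\nu_{l,k}^+$ of the symbols on $B_0$ given those on the past blocks equals the conditional law under $\mu_{\beta,0}^+$ of the corresponding line-spins. The candidate object is the limit
\begin{equation}
P\bigl(\omega_{B_0}\bigm|\omega_{B_{-1}},\omega_{B_{-2}},\dots\bigr):=\lim_{m\to\infty}\mu_{\beta,0}^+\bigl(\omega_{B_0}\bigm|\omega_{B_{-1}},\dots,\omega_{B_{-m}}\bigr),
\end{equation}
whose existence and continuity will follow from the variation estimate below (the single-symbol $g$-function of the relabelled process is then recovered from these block laws by further conditioning within the current block, a finite local operation that preserves the relevant positivity and continuity).

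First I would reduce the variation of $P$ to a two-dimensional disagreement estimate. Fix two pasts agreeing on $B_{-1},\dots,B_{-(m-1)}$ but differing from $B_{-m}$ leftwards. By the FKG property together with the one-sided monotonicity exploited in Proposition \ref{prop domi} --- which fails for two-sided conditioning --- the two resulting conditional laws of $\omega_{B_0}$ are stochastically ordered, and their discrepancy is maximised by pinning all blocks from $B_{-m}$ on to all-plus in one measure and to all-minus in the other, the intervening blocks being held at a common worst case. Thus the per-block variation of $P$ is bounded by the total-variation distance near $B_0$ between two plus-phase Ising measures that agree on $B_{-1},\dots,B_{-(m-1)}$ and carry opposite extreme data from $B_{-m}$ onwards.

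The core step is to bound this distance by a contour estimate in the monotone coupling of the two conditioned measures on the hidden spins of $\bZ^2\setminus(\bZ_{l,k}\times\{0\})$. A disagreement can reach $B_0$ only through a connected disagreement cluster of the coupling linking the far blocks to $B_0$; since the kept blocks have fixed length $l$ while the gaps have length $k$, any such cluster must bridge the $m-1$ intervening gaps along the line. At low temperature each bridging is suppressed by a Peierls factor, so the disagreement probability at $B_0$ is at most $C(l)\,\rho^{m}$ for some $\rho=\rho(\beta,k)$ that tends to $0$ as $\beta\to\infty$ for fixed $k$ and can be forced below $1$ once $k\geq K(l)$ and then $\beta\geq\beta(l,k)$. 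This is the adaptation of the low-temperature contour analysis of \cite{MaesRedigMoffaertNGibbs1999} and \cite{LorincziVelde1994} to the present one-sided setting. Because the bound is uniform in the conditioning, the limit defining $P$ converges uniformly, so $P$ is everywhere defined and continuous; and since fixing $j$ coordinates of the past pins down its most recent $\lfloor j/l\rfloor$ blocks, one obtains $var_j(P)\le C(l)\rho^{j/l}$ and hence $\sum_{j}var_j(P)<\infty$.

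It remains to verify strict positivity and to conclude. That $P$ takes values in $(0,1)$ follows from a uniform finite-energy bound: flipping the spins on $B_0$ changes the energy by an amount depending only on $l$, so every symbol on $B_0$ has conditional probability bounded below uniformly in the boundary data; together with continuity this yields genuine regularity, which is stronger than the almost-regularity of Theorem \ref{thm Schonmann $g$-measure}. Consistency of $P$ with $\nu_{l,k}^+$ holds by construction, and summable variation then forces $\nu_{l,k}^+$ to be the unique consistent $g$-measure by the argument already used for Proposition \ref{prop unique}. I expect the main obstacle to be precisely the contour step: the long-range wetting correlations responsible for the non-Gibbsianness of $\nu_{\beta,0}^+$ must be shown to be genuinely severed by the gaps, i.e.\ that at low temperature the ambient plus phase heals across each empty gap so that a minus-disagreement cannot cheaply tunnel from one kept block to the next. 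Controlling this healing uniformly in the conditioning is where both $k$ large and $\beta$ large are indispensable and where the threshold $K(l)$ enters.
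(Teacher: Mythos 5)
Your strategy is viable but genuinely different from the paper's. The paper does not construct a one-sided $g$-function at all: it invokes \cite[Theorem 4.2]{MaesRedigMoffaertNGibbs1999}, which shows (for $l=1$, $k\geq 4$ and $\beta$ large) that the decimated projection is a two-sided Gibbs measure with exponentially decaying correlations, remarks that the same low-temperature analysis extends to blocks of length $l>1$ once $k$ and $\beta$ are large, and then transfers this two-sided statement to the one-sided world by checking that the measure satisfies the HUC condition of \cite{FernandezMaillardCCCandGibbs2004} and applying \cite[Theorem 4.16]{FernandezMaillardCCCandGibbs2004}, which converts such Gibbs measures into regular $g$-measures with summable variation. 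You instead build the $g$-function directly as a monotone limit of block conditionals and bound its variation by a one-sided FKG coupling plus a Peierls-type disagreement estimate, i.e.\ the same one-sided machinery the paper deploys for Theorem \ref{thm Schonmann $g$-measure} (Lemmas \ref{lem ergodic} and \ref{lem uniqueness}, Proposition \ref{prop domi}). Your route buys self-containedness, never leaves the one-sided setting, and makes the origin of $K(l)$ visible in the energy accounting; the paper's route buys economy, outsourcing all contour/expansion work to \cite{MaesRedigMoffaertNGibbs1999} and all the ``decay of correlations implies regular $g$-function'' bookkeeping to \cite{FernandezMaillardCCCandGibbs2004}.

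Two caveats. First, your assertion that a disagreement cluster ``must bridge the $m-1$ intervening gaps along the line'' is not literally correct (the cluster may detour through the bulk, though there it pays full Peierls cost), and the genuinely dangerous regime is along the line, where the balance is marginal: a droplet hugging the line costs roughly $4\beta$ per unit length of perimeter while gaining at most order $\beta l$ per swallowed minus block, so the net cost per period is of order $\beta k$ --- this is exactly the wetting mechanism that makes $k=0$ (the Schonmann projection itself) fail, and controlling it uniformly in the conditioning is the substance of the estimate the paper imports from \cite{MaesRedigMoffaertNGibbs1999} and \cite{LorincziVelde1994}. You correctly identify this as the crux but leave it as a sketch, so at that point your argument and the paper's rest on the same external low-temperature analysis. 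Second, after relabelling $\bZ_{l,k}$ as $\bZ$, the single-symbol process is invariant only under shifts by $l$, not by $1$, whereas the paper's $g$-measure framework assumes translation invariance; your block-level construction (alphabet $A^l$) is in fact the correct formalization for $l>1$, and you should state the conclusion there rather than descending to single symbols.
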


\begin{rem}
 \citet{LorincziVelde1994} showed that, for large enough $\beta>\beta_c$, the measure $\nu_{1,k}$, $k \geq 2$, is a Gibbs measure, and their approach can also be used to show that it is a $g$-measure. Interestingly, in the same paper, they conjectured that $\nu_{1,1}$ is, similar to the Schonmann projection $\nu_{1,0}$, not a Gibbs measure. On the other hand, from our results it follows that both of them are almost regular $g$-measures.
\end{rem}

We end this section with a strengthening of Theorem \ref{thm Schonmann $g$-measure}b)  for the Ising model on $\bZ^2$.  For this, let $C_{\theta} = \{ (x,y) \in \bZ^2 \colon x\geq 0, |y| \leq  e^{\theta  |x|}\}$, where $\theta \in (0,\infty)$, and let, for $n \in \bN$, $C_{\theta,n} := C_{\theta} \cap [n,\infty) \times \bZ$. Moreover, let $\tilde{\cF}_{<0}$ be the $\sigma$-algebra concentrating on the events on $(-\infty,0)\times\{0\}$.

\begin{thm}\label{thm Ising is g}
Consider the Ising model on $\bZ^2$ in the phase transition regime. Then there exists $\theta\in (0,\infty)$, depending on $\beta$, such  that
\begin{align}\label{eq mixing Ising}
\lim_{n \rightarrow \infty} \sup_{B \in \tilde{\cF}_{_{\theta,n}}} \sup_{A \in \tilde{\cF}_{<0}} | \mu_{\beta,0}^+(B \mid A) - \mu_{\beta,0}^+(B)| =0.
\end{align}
\end{thm}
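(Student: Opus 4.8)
The plan is to collapse the double supremum in \eqref{eq mixing Ising} to a single disagreement probability via a monotone coupling, and then to control that probability by an exponential estimate whose rate must be chosen to dominate the exponential growth of the cone. Write $\mu^{+,+}$ and $\mu^{+,-}$ for $\mu_{\beta,0}^+$ conditioned on the events $\{\eta\equiv+1\}$, respectively $\{\eta\equiv-1\}$, on the negative axis $(-\infty,0)\times\{0\}$. Since $\mu_{\beta,0}^+$ is an FKG measure, I would first construct a \emph{grand} monotone coupling $(\sigma^{\zeta})_{\zeta}$ of the conditional laws $\mu_{\beta,0}^+(\cdot\mid\eta=\zeta)$, ranging over all axis configurations $\zeta\in\tilde\cF_{<0}$ (for instance via a common heat-bath dynamics driven by shared uniforms), so that $\sigma^{-}\le\sigma^{\zeta}\le\sigma^{+}$ holds pointwise for every $\zeta$. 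Set $\epsilon_n:=\bP(\sigma^{-}\neq\sigma^{+}\text{ somewhere on }C_{\theta,n})$. On the complementary event all $\sigma^{\zeta}$ coincide on $C_{\theta,n}$, so any two of the conditional laws are within total-variation distance $\epsilon_n$ once restricted to $\tilde\cF_{C_{\theta,n}}$. For any $A\in\tilde\cF_{<0}$, both $\mu_{\beta,0}^+(\cdot\mid A)$ and $\mu_{\beta,0}^+$ are mixtures over $\zeta$ of these laws, so convexity of total variation yields $\sup_{B\in\tilde\cF_{C_{\theta,n}}}|\mu_{\beta,0}^+(B\mid A)-\mu_{\beta,0}^+(B)|\le\epsilon_n$ uniformly in $A$. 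It therefore suffices to show $\epsilon_n\to0$.

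The heart of the argument is a pointwise estimate of the form $\bP(\sigma^{-}_{(x,y)}\neq\sigma^{+}_{(x,y)})\le C\,e^{-\lambda x}$ for all $x\ge1$ and all $y$, with $\lambda=\lambda(\beta)>0$. This is where the one-sided geometry is decisive: disagreement at a site of first coordinate $x>0$ forces the minus-conditioning to propagate horizontally across the entire strip $[0,x]\times\bZ$, and in the plus phase at $\beta>\beta_c$ such propagation requires a contour/interface of horizontal extent at least $x$. A Peierls / surface-tension estimate (equivalently, exponential decay of the relevant low-temperature connectivities in the plus phase) then bounds this by $C\,e^{-\lambda x}$. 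This is the same mechanism behind Theorem~\ref{thm Schonmann $g$-measure}~b) and the domination statement of Proposition~\ref{prop domi}; the crucial point, in contrast with the two-sided conditioning of \eqref{eq two sided nomixing}, is that there is no minus-region to the right to stabilise a minus layer, so the influence genuinely decays.

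Granting this bound, I would choose $\theta\in(0,\lambda)$ and simply sum over the cone:
\[
\epsilon_n\;\le\;\sum_{x\ge n}\sum_{|y|\le e^{\theta x}}C\,e^{-\lambda x}
\;\le\;C'\sum_{x\ge n}e^{-(\lambda-\theta)x}\;\longrightarrow\;0
\qquad(n\to\infty).
\]
This is exactly where the exponential cap $|y|\le e^{\theta x}$ on the height enters: it keeps the number of sites in each vertical slice below the per-site exponential decay, so that the geometric series converges. This proves \eqref{eq mixing Ising} for any such $\theta$, and $\theta$ may be taken larger as $\beta$ increases, since $\lambda(\beta)$ grows with the surface tension.

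The hard part will be the decay estimate of the second paragraph, and in particular its uniformity in the height $y$ up to $e^{\theta x}$: one must show that the horizontal penetration of the minus-influence is exponentially suppressed at a rate that does not deteriorate as the target site moves high above the axis, precisely in the regime of genuine phase coexistence where two-sided correlation decay fails. I expect this to require combining the FKG monotone coupling with low-temperature contour (or random-cluster) estimates for the plus phase, most likely building on the one-dimensional mechanism already established for Theorem~\ref{thm Schonmann $g$-measure}~b) and upgrading it to off-axis sites through a vertical-crossing argument. The most delicate accounting is to treat the semi-infinite minus ray as a source without letting the sum of its influence over all its sites spoil the exponential rate.
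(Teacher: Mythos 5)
Your first paragraph and your final summation are sound and match the paper's outer skeleton: the paper likewise uses the strong FKG property to sandwich $\mu_{\beta,0}^+(\cdot \mid A)$, for arbitrary $A\in\tilde{\cF}_{<0}$, between the two extreme conditionings (all minus, all plus on the negative axis), then couples monotonically via Strassen and sums per-site disagreement probabilities over the cone, choosing the cone opening $\theta$ strictly below the exponential rate. The genuine gap is your second paragraph, which you correctly flag as ``the hard part'' but whose proposed justification would fail. The uniform pointwise bound $\bP\bigl(\sigma^-_{(x,y)}\neq\sigma^+_{(x,y)}\bigr)\le Ce^{-\lambda x}$ is not proved in the paper and is not accessible by a Peierls/surface-tension argument: at phase coexistence a minus droplet costs only surface, so contour estimates do not suppress the influence of an infinite minus source, and the very same heuristic (``disagreement must propagate across the strip, hence needs an interface of extent $x$'') applied to two-sided conditioning would ``prove'' decay in flat contradiction with \eqref{eq two sided nomixing}. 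Worse, even the rateless statement that the minus-ray conditioning converges locally to $\mu_{\beta,0}^+$ is itself a nontrivial theorem: in the paper it is obtained by identifying the limit as a DLR state, invoking Aizenman--Higuchi, and excluding a minus component via the random-cluster domination of Proposition \ref{prop domi}. Your proposal assumes a much stronger quantitative form of this fact without any mechanism to deliver it; your reference to Proposition \ref{prop domi} does not help here, since that proposition concerns large deviations of all-minus blocks under $\nu^{\pm}_{\beta,0}$, not decay of the conditioned influence.

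The paper's actual route deliberately avoids any rate for the influence of the minus ray, which is why it succeeds where your scheme stalls. Lemma \ref{lem second AP} shows, via the agreement-percolation criterion \cite[Theorem 8.1]{GeorgiiHaggstromMaes2001} together with a reflection trick confining the path to a half-plane, that under the minus conditioning there is with positive probability an infinite plus path from $(m,0)$ staying left of the column $\{m\}\times\bZ$; tail triviality (as in \cite[Lemma 1]{SchonmannNGibbs1989}) upgrades this to $\mu_{\beta,0}^+(B_{n/2}\mid \eta=-1 \text{ on } (-\infty,0)\times\{0\})\to 1$ (Corollary \ref{cor agreement}), with no rate whatsoever. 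On that shield event the Markov property sandwiches the law to the right between the plus-ray conditioning and conditioning on an all-plus column at $\{n/2\}\times\bZ$, and only at this point do exponential estimates enter: the standard bound \eqref{eq convergence to plus estimates} for the \emph{unconditioned} extremal state $\mu_{\beta,0}^+$, whose rate $\hat{\theta}$ is what dictates the admissible cone opening $\theta<\hat{\theta}$ in \eqref{eq mixing Ising}. So the exponential input is a property of $\mu_{\beta,0}^+$ relative to a plus column, valid uniformly in the height $y$ for free, not a property of the minus-conditioned measure; your $\lambda$ would have to come from precisely the estimate you defer, and without the percolation-shield-plus-tail-triviality mechanism (or a substitute for it) there is no known way to obtain it, uniformly in $y$ or otherwise, for all $\beta>\beta_c$. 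To repair your proof you should replace the pointwise exponential disagreement bound by the event decomposition of \eqref{eq help me help me}: a rateless term controlled by the shield, and an exponential term controlled beyond the shield.
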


The one-sided mixing property in the above theorem can be interpreted in the sense of \emph{wetting}. The wetting phenomenon appears in systems with several types of particles inside of a box, whose physical parameters are such that they allow coexistence of phases. If, for example, one looks at the Ising model prepared in the plus phase, but the horizontal bottom wall of the box preferentially adsorbs the minus phase, one notices the appearance of a thin film of the minus phase between the wall and the bulk plus phase. A good reference on the topic are the notes of Pfister and Velenik, see \cite{PfisterVelenik1998}. 

In our case, Theorem \ref{thm Ising is g} shows that a piece of wall adsorbing the minus phase generates no longitudinal wetting. Moreover, our result should be contrasted with the recent results on one-sided wetting and discontinuity for the Dyson-Ising model, see \cite[Proposition 4]{BissacotEndovanEnterLeNy17}.

\begin{rem}
As $\beta$ increases, the parameter $\theta$ in Theorem \ref{thm Ising is g} goes to infinity and  $C_{\theta}$ converges to the entire half plane $\{(x,y)\in\bZ^2 : x\geq 0\}$.
\end{rem}

\subsection{Discussion and open questions}\label{sec discussion}
In this subsection we discuss several applications of the theorems presented so far, as well as some (in our opinion) intriguing open questions.
\begin{enumerate}
\item \citet{BergSteifFC1999} proved that the plus-phase of the Ising model with $h=0$ can be \emph{perfectly sampled} if and only if $\beta<\beta_c$. 
More recently, \citet{GalloTakahaskiAttractive2014} showed that an attractive  and regular $g$-measure has a perfect sampling algorithm if and only if it is unique. By standard arguments in this paper (see Lemmas \ref{lem ergodic} and \ref{lem uniqueness}), the projection of the Ising model onto a line, $\nu_{\beta,h}^+$, is consistent with an attractive $g$-function. Furthermore, by Theorem \ref{thm Schonmann $g$-measure} and Proposition \ref{prop unique} it is unique. Propositions \ref{prop unique} and \ref{prop decimation} thus, by the results in \cite{GalloTakahaskiAttractive2014}, give us conditions for when $\nu_{\beta,h}^+$, or decimations thereof, can be perfectly sampled. It is an intriguing open question whether $\nu_{\beta,h}^+$ can be perfectly sampled in the phase transition regime. We plan to address this question in future work.

\item All theorems and propositions of this section can be extended to the projection of the two-dimensional Ising model onto $\bZ \times \{0,\dots,k\}$, $k \in \bN$. However, it is important for several of our arguments  that the projections under consideration are translation-invariant. It would be interesting to extend our results to also include projections which are not translation-invariant. Moreover, the proof techniques used in this paper to study the $2$-dimensional Ising model can presumably also be extended to higher dimensions and to models sharing similar monotonicity properties.

\item The mixing property in Equation \eqref{eq first wetting}  is an essential strengthening of \citet[Lemma 2]{SchonmannLDPising1987} and a rather powerful mixing property. For instance, general concentration inequalities follow by the work of \citet{Samson2000}.

\item In the two-sided case, the lack of Gibbsianness follows as a consequence of a wetting transition (see \cite{EnterFernandezSokal1993}, \cite{FernandezPfisterNGibbs1997}, \cite{SchonmannNGibbs1989}). By Theorem \ref{thm Schonmann $g$-measure}b) and Theorem \ref{thm Ising is g} this is in contrast to the one-sided case, which strongly suggests that the Schonmann projection is indeed a \emph{regular $g$-measure}.

\item  We hope that our result showing that the Schonmann projection is an almost regular $g$-measure can cast some light on, and possible be of importance to, the question whether it is almost Gibbsian or not. \cite{FernandezMaillardCCCandGibbs2004} provides sufficient regularity assumptions for the theory of $1$d Gibbs measures and $g$-measures to be equivalent. Is there an analogous equivalence between almost regular $g$-measures and almost Gibbsian measures which requires weaker assumptions?

\item 
A note on the critical case $h=0$ and $\beta=\beta_c$ is in place. In this case, as follows by \cite{FernandezPfisterNGibbs1997}, the measure $\nu_{\beta_c,0}^+$ is almost Gibbs, and by Theorem \ref{thm Schonmann $g$-measure} it is also an almost regular $g$-measure. It is an interesting question whether or not $\nu_{\beta_c,0}^+$ is in fact a Gibbs measure or a regular $g$-measure.

\end{enumerate}

\section{Proofs}\label{sec proofs}

\subsection{Proof of Theorem \ref{thm Schonmann $g$-measure}a)}
Let $(X,\cB)$ stand for $(\Omega,\cF)$ or $(\tilde{\Omega},\tilde{\cF})$.  
Associate to $X$ the partial ordering such that $\xi \leq \eta$  if and only if $\xi(x) \leq \eta(x)$ for all $x \in \bZ$, or $\bZ^2$, respectively. An event $B \in \cB$ is said to be \emph{increasing} if $\xi\leq\eta$ implies $1_{B}(\xi)\leq 1_{B}(\eta)$. More generally, a function $f\colon X \rightarrow \bR$ is increasing if $\xi\leq\eta$ implies $f(\xi)\leq f(\eta)$. 
A  $g$-function $P \colon \{-,+\}\times \Omega_{-\infty}^{-1} \rightarrow [0,1]$ for which $P(+\mid\cdot)$ is increasing is said to be \emph{attractive}.

\begin{defn}\label{def FKG}
Let $\mu$ be a probability measure on $(X,\cB)$. We say that
\begin{enumerate}
\item $\mu$ is FKG (or positively associated) if  $\mu(B_1 \cap B_2) \geq \mu(B_1) \mu(B_2)$ for any two increasing events $B_1,B_2 \in \cB$.
\item $\mu$ is strong FKG (or lattice FKG)  if for every $\Lambda$ finite and $\sigma \in X$, the measure $\mu(\cdot \mid \eta = \sigma \text{ on } \Lambda )$ is positively associated, where $\{\eta = \sigma \text{ on } \Lambda\} := \{ \eta \in X \colon \eta_x = \sigma_x \: \forall \: x\in \Lambda\}$.
\end{enumerate}
\end{defn}
The Ising model $\mu_{\beta,h}^+$ is known to be strong FKG and this property directly transfers to the measure $\nu_{\beta,h}^+$. By using this property,  via a global specification argument as thoroughly described in \cite{FernandezPfisterNGibbs1997}, it can be shown that the functions $P_{\beta,h}^{+,\pm} \colon \{-,+\}\times \Omega_{-\infty}^{-1} \rightarrow (0,1)$ given by
\begin{equation}\label{def particular $g$-functions}
P_{\beta,h}^{+,\pm}(\pm \mid \omega^{-1}_{-\infty}) := \lim_{n \rightarrow \infty}\lim_{l\rightarrow \infty} \nu_{\beta,h}^+ \left( \eta_0=\pm  \mid \eta^{-1}_{-(n+l)} = \pm^{-(n+1)}_{-(n+l)}\omega^{-1}_{-n} \right) 
\end{equation}
are well-defined.

\begin{lem}\label{lem ergodic}
For every $\beta \in (0,\infty)$ and $h \in \mathbb{R}$, the functions $P_{\beta,h}^{+,\pm}$ are well-defined and attractive $g$-functions, and they  satisfy $P_{\beta,h}^{+,-}(+\mid \cdot)\leq P_{\beta,h}^{+,+}(+\mid \cdot)$. Moreover, the following probability measures 
\begin{align}
&\nu_{\beta,h}^{+,\pm} (\cdot) := \lim_{n \rightarrow \infty} \lim_{l\rightarrow \infty}  \nu_{\beta,h}^+ \left( \cdot \mid \eta^{-n}_{-(n+l)} = \pm^{-n}_{-(n+l)} \right)
\end{align}
are consistent and extremal with respect to $P_{\beta,h}^{+,+}$ and $P_{\beta,h}^{+,-}$ respectively.
\end{lem}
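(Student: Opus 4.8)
The plan is to derive all four assertions from the strong FKG property of $\nu_{\beta,h}^+$ (Definition~\ref{def FKG}) together with the attendant monotonicity of conditional laws in their boundary conditions, pushing this monotonicity through the two limits that define $P_{\beta,h}^{+,\pm}$ and $\nu_{\beta,h}^{+,\pm}$. First I record the monotone structure. Fix a past $\omega_{-\infty}^{-1}$ and $n$; increasing $l$ by one enlarges the forced all-$+$ block by one further site to the left, so the $(l+1)$-conditioned measure is the $l$-conditioned one conditioned additionally on an increasing event $\{\eta=+\}$ at that site. Since conditioning on a cylinder preserves strong FKG, these measures are stochastically increasing in $l$ and hence converge weakly; the all-$-$ versions decrease and converge. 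This gives the inner ($l$-)limit. For the outer ($n$-)limit I compare stage $n$ with stage $n+1$: after the inner limit, passing from $n$ to $n+1$ replaces the forced value $+$ at site $-(n+1)$ by the true value $\omega_{-(n+1)}\le +$, so by strong FKG the plus-conditioned quantities decrease in $n$ and the minus-conditioned ones increase, and being bounded they converge. Running the same argument \emph{without} the window shows $\nu_n^{+,\pm}:=\lim_{l}\nu_{\beta,h}^+(\cdot\mid \eta=\pm\text{ on }[-(n+l),-n])$ is monotone in $n$ ($\nu_n^{+,+}$ decreasing, $\nu_n^{+,-}$ increasing), so $\nu_{\beta,h}^{+,\pm}=\lim_n\nu_n^{+,\pm}$ exist.

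That $P_{\beta,h}^{+,\pm}$ are $g$-functions is then routine. The normalization $\sum_{\omega_0}P_{\beta,h}^{+,\pm}(\omega_0\mid\cdot)=1$ is inherited from each finite stage, where the two conditional probabilities sum to one; strict positivity (values in $(0,1)$) follows from the finite-energy property of $\mu_{\beta,h}^+$, which provides a uniform ellipticity bound on the single-site conditional $\nu_{\beta,h}^+(\eta_0=+\mid\text{rest})$ that survives the limit. Attractiveness is immediate: for a fixed far boundary each finite-stage probability $\nu_{\beta,h}^+(\eta_0=+\mid \pm\cdots\omega_{-n}^{-1})$ is increasing in $\omega$ by strong FKG, and the monotone limit preserves this. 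Finally, comparing the all-$-$ and all-$+$ far boundaries at each finite stage with a common window and invoking strong FKG yields $\nu_{\beta,h}^+(\eta_0=+\mid -\cdots\omega)\le \nu_{\beta,h}^+(\eta_0=+\mid +\cdots\omega)$, whence $P_{\beta,h}^{+,-}(+\mid\cdot)\le P_{\beta,h}^{+,+}(+\mid\cdot)$ after passing to the limit.

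The substantive claim is that $\nu_{\beta,h}^{+,+}$ is consistent with $P_{\beta,h}^{+,+}$ (the minus case is symmetric). Note first that $\nu_{\beta,h}^{+,+}$ is translation invariant, because the shift sends $\nu_n^{+,+}$ to $\nu_{n+1}^{+,+}$ and hence fixes the limit. Writing $g_n(\omega_{-n}^{-1}):=\nu_{n+1}^{+,+}(\eta_0=+\mid \eta_{-n}^{-1}=\omega_{-n}^{-1})$, by construction $g_n\downarrow P_{\beta,h}^{+,+}(+\mid\cdot)$ pointwise. Since $\sigma(\eta_{-n},\dots,\eta_{-1})$ increases to the past $\sigma$-algebra, martingale convergence gives $\nu_{\beta,h}^{+,+}(\eta_0=+\mid \eta_{-\infty}^{-1})=\lim_n\nu_{\beta,h}^{+,+}(\eta_0=+\mid \eta_{-n}^{-1})$ a.e. The key inequality is
\[
\nu_{\beta,h}^{+,+}(\eta_0=+\mid \eta_{-n}^{-1})\;\le\;g_n(\eta_{-n}^{-1})\qquad \nu_{\beta,h}^{+,+}\text{-a.e.},
\]
which holds because, conditionally on the same window, $\nu_{n+1}^{+,+}$ carries the maximal (all-$+$) far past while under $\nu_{\beta,h}^{+,+}$ the far past is dominated by all-$+$; strong FKG and $\{\eta_0=+\}$ increasing give the bound. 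With $g_n\downarrow P_{\beta,h}^{+,+}$ this already yields $\nu_{\beta,h}^{+,+}(\eta_0=+\mid \eta_{-\infty}^{-1})\le P_{\beta,h}^{+,+}(+\mid\cdot)$ a.e. To upgrade to equality I match the integrals, using that $g_n$ is increasing (attractiveness) and $\nu_{\beta,h}^{+,+}\preceq \nu_{n+1}^{+,+}$:
\[
\int P_{\beta,h}^{+,+}(+\mid\cdot)\,d\nu_{\beta,h}^{+,+}=\lim_n \bE_{\nu_{\beta,h}^{+,+}}[g_n]\le \lim_n \bE_{\nu_{n+1}^{+,+}}[g_n]=\lim_n \nu_{n+1}^{+,+}(\eta_0=+)=\nu_{\beta,h}^{+,+}(\eta_0=+),
\]
where the last equality uses $\nu_n^{+,+}\to\nu_{\beta,h}^{+,+}$ on the cylinder $\{\eta_0=+\}$. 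Since the right-hand side equals $\int \nu_{\beta,h}^{+,+}(\eta_0=+\mid \eta_{-\infty}^{-1})\,d\nu_{\beta,h}^{+,+}$, an a.e.\ inequality of integrands with equal integrals forces a.e.\ equality, which is exactly consistency.

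Extremality follows from attractiveness of $P_{\beta,h}^{+,\pm}$: the set of consistent measures then has a greatest and a least element in the stochastic order, and these are extreme points. By construction $\nu_{\beta,h}^{+,+}$ arises from the maximal all-$+$ boundary and dominates every measure consistent with $P_{\beta,h}^{+,+}$, so it is the greatest such measure; symmetrically $\nu_{\beta,h}^{+,-}$ is the least measure consistent with $P_{\beta,h}^{+,-}$. I expect the only genuinely delicate point to be the consistency step, and within it the key inequality: rigorously comparing the finite-window conditional law of the limiting measure $\nu_{\beta,h}^{+,+}$ with that of $\nu_{n+1}^{+,+}$ requires care in interchanging the inner $l$-limit, the conditioning on the window, and the outer $n$-limit, since ``all-$+$ in the far past'' is a tail condition. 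This is precisely the bookkeeping of the global-specification framework of \cite{FernandezPfisterNGibbs1997}, on which I would rely to make boundary-condition monotonicity rigorous for the limiting measures.
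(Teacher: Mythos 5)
Your proposal is correct and follows essentially the same route as the paper, whose entire proof of this lemma is a citation to Hulse (1991, Lemmas 2.3 and 3.1): your monotone double-limit construction via strong FKG, the integral-matching argument for consistency, and maximality-in-stochastic-order implying extremality are precisely the attractive-$g$-measure machinery of that reference, and your deferral to the Fern\'andez--Pfister global-specification framework for interchanging conditioning with the limits matches the remark the paper itself makes just before the lemma. The one step you assert without argument --- that every measure consistent with $P_{\beta,h}^{+,+}$ is stochastically dominated by $\nu_{\beta,h}^{+,+}$, needed for extremality --- is exactly the content of Hulse's Lemma 3.1 and is invoked the same way in the paper's Lemma 3.9d), so it is an appeal to the cited source rather than a genuine gap.
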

\begin{proof}
A proof of this statement is given in \cite{HulseAttractive1991}; see Lemma 2.3 and Lemma 3.1 therein.
\end{proof}

\begin{lem}\label{lem uniqueness}
Let $\beta \in (0,\infty)$ and $h\in\mathbb{R}$, then 
\begin{description}
\item[a)]  $\nu_{\beta,h}^{+}$ is consistent with $P_{\beta,h}^{+,+}$.
\item[b)] if $P_{\beta,h}^{+,+}=P_{\beta,h}^{+,-}$, then $\nu_{\beta,h}^{+}$ is a regular $g$-measure.
\item[c)] if $\nu_{\beta,h}^+$ is consistent with $P_{\beta,h}^{+,-}$, then $\nu_{\beta,h}^{+}$ is an almost regular $g$-measure.
\item[d)]  if $\nu_{\beta,h}^{+} = \nu_{\beta,h}^{+,-}$, then $\nu_{\beta,h}^{+}$  is the unique measure consistent with $P_{\beta,h}^{+,+}$.
\end{description} 
\end{lem}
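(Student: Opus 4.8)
Throughout I suppress the parameters $(\beta,h)$, writing $\nu^{+}$, $\nu^{+,\pm}$, $P^{+,\pm}$, and I use freely that $\nu^{+}$ is strong FKG (Definition~\ref{def FKG}), inherited from $\mu^{+}$. For part a) the plan is first to prove the identity $\nu^{+,+}=\nu^{+}$; consistency of $\nu^{+}$ with $P^{+,+}$ then follows at once, since $\nu^{+,+}$ is consistent with $P^{+,+}$ by Lemma~\ref{lem ergodic}. To obtain the identity I fix $n$ and observe that for every $l$ the conditioning set $[-(n+l),-n]\times\{0\}$ is contained in the complement of the box $\Lambda_{n}:=[-(n-1),n-1]^{2}$. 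Monotonicity in the boundary condition — the finite-volume manifestation of strong FKG, obtained by approximating the conditioning event by finite ones — then yields, in the stochastic order and uniformly in $l$, $\mu^{+}\le \mu^{+}\!\left(\cdot\mid\sigma=+\text{ on }[-(n+l),-n]\times\{0\}\right)\le \mu^{\Lambda_{n},+}$. Projecting onto $\bZ\times\{0\}$, sending $l\to\infty$ and then $n\to\infty$, and using $\mu^{\Lambda_{n},+}\implies\mu^{+}$ from \eqref{def plus Ising}, the outer bounds squeeze together and force $\nu^{+,+}=\nu^{+}$.

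Parts b) and c) both rest on a single semicontinuity estimate. Write $\overline{P}_{k}(+\mid\omega)$ and $\underline{P}_{k}(+\mid\omega)$ for the $\nu^{+}$-probability of $\{\eta_{0}=+\}$ given $\eta_{-k}^{-1}=\omega_{-k}^{-1}$ together with an all-plus, respectively all-minus, far past; by the defining double limits of $P^{+,\pm}$ one has $\overline{P}_{k}\downarrow P^{+,+}$ and $\underline{P}_{k}\uparrow P^{+,-}$ as $k\to\infty$. Strong FKG gives $\underline{P}_{k}(+\mid\omega)\le P^{+,-}(+\mid\sigma)\le P^{+,+}(+\mid\sigma)\le\overline{P}_{k}(+\mid\omega)$ for every $\sigma$ agreeing with $\omega$ on $[-k,-1]$, so the oscillation of $P^{+,+}$ over such $\sigma$ is controlled by $\overline{P}_{k}(+\mid\omega)-\underline{P}_{k}(+\mid\omega)\to P^{+,+}(+\mid\omega)-P^{+,-}(+\mid\omega)$. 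For part b), the hypothesis $P^{+,+}=P^{+,-}$ makes this gap vanish pointwise; since the gaps are locally constant and decrease monotonically to the continuous (here zero) limit on the compact space $\Omega_{<0}$, Dini's theorem upgrades this to $var_{k}(P)\to 0$, i.e.\ $P:=P^{+,+}$ is continuous, and being $(0,1)$-valued and consistent by part a), $\nu^{+}$ is a regular $g$-measure. For part c), the assumption that $\nu^{+}$ is also consistent with $P^{+,-}$ forces $P^{+,+}=P^{+,-}$ at $\nu^{+}$-a.e.\ $\omega$ (both equal $\nu^{+}(X_{0}=+\mid\cF_{<0})$ by \eqref{eq consistency} and part a)); at each such $\omega$ the displayed gap tends to $0$, so $P:=P^{+,+}$ is continuous there, which is precisely almost regularity.

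For part d), assume $\nu^{+}=\nu^{+,-}$. By Lemma~\ref{lem ergodic} the maximal measure consistent with $P^{+,+}$ is $\nu^{+,+}$, which equals $\nu^{+}$ by part a). Let $\underline{m}$ be the minimal measure consistent with $P^{+,+}$. Comparing the forward iterations of the two attractive $g$-functions started from the all-minus past through a monotone coupling, the ordering $P^{+,-}(+\mid\cdot)\le P^{+,+}(+\mid\cdot)$ of Lemma~\ref{lem ergodic} gives $\nu^{+,-}\le\underline{m}$. Hence $\nu^{+}=\nu^{+,-}\le\underline{m}\le\nu^{+,+}=\nu^{+}$, so $\underline{m}=\nu^{+}$; as every measure consistent with an attractive $g$-function lies between the minimal and maximal ones, the coincidence of $\underline{m}$ and $\nu^{+,+}$ yields that $\nu^{+}$ is the unique measure consistent with $P^{+,+}$.

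The step I expect to be the crux is the identity $\nu^{+,+}=\nu^{+}$ used in a) and d): one must justify the upper stochastic domination by the finite-volume plus-boundary measure uniformly over the unbounded conditioning region, and control the interchange of the limits $l\to\infty$, $n\to\infty$ with the projection. The monotone-coupling and Dini arguments feeding b)–d) are then routine given strong FKG and the standard theory of attractive $g$-measures.
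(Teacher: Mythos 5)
Your proposal is correct and follows essentially the same route as the paper: part a) via the identity $\nu_{\beta,h}^{+}=\nu_{\beta,h}^{+,+}$ (which the paper reads off directly from the definition \eqref{def plus Ising}, and which your FKG squeeze between $\mu_{\beta,h}^{+}$ and $\mu_{\beta,h}^{\Lambda_n,+}$ makes explicit), parts b) and c) via the ordering $P_{\beta,h}^{+,-}(+\mid\cdot)\leq P_{\beta,h}^{+,+}(+\mid\cdot)$ together with the semicontinuous sandwich forcing continuity where the two $g$-functions agree (the paper's a.e.\ equality argument $\nu_{\beta,h}^+\bigl(P_{\beta,h}^{+,+}(+\mid\cdot)-P_{\beta,h}^{+,-}(+\mid\cdot)\bigr)=0$ is exactly your observation that both are versions of the same conditional probability), and part d) via the strong FKG sandwich $\nu_{\beta,h}^{+,-}\leq\nu\leq\nu_{\beta,h}^{+,+}$ for any consistent $\nu$. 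Your write-up merely fills in details the paper leaves terse (the Dini upgrade in b) even gives $var_k(P)\to 0$ explicitly), so there is nothing to correct.
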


\begin{proof}
From the definition of $\mu_{\beta,h}^+$ it directly follows (see Equation \eqref{def plus Ising}) that $\nu_{\beta,h}^+=\nu_{\beta,h}^{+,+}$, which we know, by the previous lemma, is consistent with $P_{\beta,h}^{+,+}$. This proves a).
Hence, that $\nu_{\beta,h}^{+}$ is a regular $g$-measure if $P_{\beta,h}^{+,+}=P_{\beta,h}^{+,-}$ follows merely by definition thus yielding b). 
Similarly, if $\nu_{\beta,h}^+$ is consistent with $P_{\beta,h}^{+,-}$, then,  since $P_{\beta,h}^{+,-}(+\mid\cdot) \leq P_{\beta,h}^{+,+}(+\mid\cdot)$ and since $\nu_{\beta}^+(P_{\beta,h}^{+,+} (+\mid~\cdot)- P_{\beta,h}^{+,-}(+\mid\cdot))=0$, we have that $P_{\beta,h}^{+,+}(\pm\mid\omega^{-1}_{-\infty})=P_{\beta,h}^{+,-}(\pm\mid\omega^{-1}_{-\infty})$ for $\nu_{\beta}^+$-a.e. $\omega^{-1}_{-\infty} \in \Omega^{-1}_{-\infty}$. That is,  $\nu_{\beta,h}^{+}$ is an almost regular $g$-measure, therefore proving c).
For d), if furthermore $\nu_{\beta,h}^{+,-} = \nu_{\beta,h}^+$, then $\nu_{\beta}^+$ is the unique measure consistent with $P_{\beta,h}^{+,+}$ since, by the strong FKG property,   any measure $\nu$ consistent with $P_{\beta,h}^{+,+}$ satisfies $\nu_{\beta,h}^{+,-} \leq \nu \leq  \nu_{\beta,h}^{+,+} $.
\end{proof}

In order to conclude Theorem \ref{thm Schonmann $g$-measure}, we will show that conditions c) and d) in Lemma \ref{lem uniqueness} indeed hold. 
To achieve this, we use a domination argument which we think is of independent interest. For this, let $\pi_{\rho}$ denote the product Bernoulli-$\{-1,+1\}$ measure on $(\Omega,\cF)$ where $\pi_{\rho}(\eta_0=+) =\rho \in [0,1]$. We say that a measure $\mu$ on $(\Omega,\cF)$ \emph{stochastically dominates} another measure $\nu$  if $\nu(B) \leq \mu(B)$ for all increasing events $B \in \mathcal{F}$.

\begin{prop}\label{prop domi}
For each $\beta>\beta_c$ there exists a $\rho \in (0,1)$ such that $\nu_{\beta,0}^+$ stochastically dominates $\pi_{\rho}$, but $\pi_{\rho}$ is not stochastically dominated by  $\nu_{\beta,0}^-$.
\end{prop}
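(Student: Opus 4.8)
The plan is to produce a single product measure $\pi_\rho$ that sits below $\nu_{\beta,0}^+$ but not below $\nu_{\beta,0}^-$ in the stochastic order, and to control both relations through single-site (indeed one-sided) conditional probabilities, the strong FKG property of $\nu_{\beta,0}^\pm$ being the central tool. I will use throughout the spin-flip symmetry at $h=0$, which identifies $\nu_{\beta,0}^-$ with the image of $\nu_{\beta,0}^+$ under the global involution $\eta\mapsto-\eta$; this swaps increasing and decreasing events and interchanges the roles of $+$ and $-$ in all conditional probabilities.

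For the first assertion I would use Strassen's theorem together with Holley's inequality. Since $\nu_{\beta,0}^+$ is strong FKG, a monotone coupling with $\pi_\rho$ exists as soon as $\rho$ does not exceed the worst-case single-site conditional probability of a $+$; by strong FKG monotonicity and translation invariance this worst case is the all-minus conditioning, so $\rho\le\rho_1:=\nu_{\beta,0}^+(\eta_0=+\mid\eta_{\mathbb Z\setminus\{0\}}=-)$ suffices, and $\rho_1>0$ by finite energy. I would then sharpen this with the paper's one-sided viewpoint: building the coupling from the left and using the attractive one-sided $g$-function of Lemma \ref{lem ergodic}, the admissible threshold improves to the infimum over pasts of the one-sided conditional, namely $\underline\rho:=\nu_{\beta,0}^+(\eta_0=+\mid\eta_{-\infty}^{-1}=-)$, in which only the past is frozen to $-$ while the future is averaged. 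The monotonicity that holds for one-sided but not two-sided conditioning of FKG measures is exactly what gives $\underline\rho\ge\rho_1$ and keeps the threshold as large as possible.

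For the second assertion I would exhibit an explicit increasing witness, the all-plus block $B_m=\{\eta_i=+:0\le i\le m\}$, for which $\pi_\rho(B_m)=\rho^{\,m+1}$. I bound $\nu_{\beta,0}^-(B_m)=\prod_{i=0}^m\nu_{\beta,0}^-(\eta_i=+\mid\eta_0^{i-1}=+)$ by replacing each factor with its largest one-sided value $\bar\rho^-:=\nu_{\beta,0}^-(\eta_0=+\mid\eta_{-\infty}^{-1}=+)$, which by spin-flip symmetry equals $1-\underline\rho$. Hence $\nu_{\beta,0}^-(B_m)\le(1-\underline\rho)^{m+1}$, whereas $\pi_{\underline\rho}(B_m)=\underline\rho^{\,m+1}$. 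Consequently, as soon as $\underline\rho>\tfrac12$ the block event violates domination for large $m$, i.e.\ $\pi_{\underline\rho}\not\preceq\nu_{\beta,0}^-$, and with the same $\rho=\underline\rho$ the first part gives $\nu_{\beta,0}^+\succeq\pi_{\underline\rho}$, completing the argument.

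The two halves therefore close up once one shows $\underline\rho=\nu_{\beta,0}^+(\eta_0=+\mid\eta_{-\infty}^{-1}=-)>\tfrac12$, that is, that freezing the entire past of the line to $-$ still leaves the origin strictly more likely to be $+$ than $-$ in the plus phase; I expect this to be the main obstacle. In the two-dimensional picture it says that a half-line of $-$ spins on the bottom row does not wet the origin, whose off-line neighbours $(0,\pm1)$ and right neighbour $(1,0)$ stay governed by the surrounding $+$ phase. For large $\beta$ this is immediate from a Peierls estimate, which forces $\underline\rho\to1$; near $\beta_c$, however, the clean threshold $\underline\rho>\tfrac12$ is delicate and may fail, and there the crude factor-by-factor bound on $\nu_{\beta,0}^-(B_m)$ must be replaced by a genuine contour/coupling estimate capturing the multi-site correlations, using two-dimensional monotonicity in boundary conditions to separate the $+$ at infinity from the frozen $-$ line. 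This quantitative separation of the conditional structures of the two phases—the precise sense in which one-sided conditioning on $-$ fails to reproduce the minus phase—is the heart of the matter and the point where the 2D Ising input is indispensable.
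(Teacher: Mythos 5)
Your reduction stalls precisely at the step you yourself flag: everything hinges on $\underline\rho=\nu_{\beta,0}^+(\eta_0=+\mid\eta_{-\infty}^{-1}=-)>\tfrac12$ for \emph{all} $\beta>\beta_c$, and you neither prove this nor is it needed — it is strictly stronger than the true content of the proposition, and you concede it ``may fail'' near $\beta_c$. So as it stands this is not a proof but a reduction to an unproven (and possibly false) inequality. Part of the loss is your choice of witness event: testing non-domination of $\nu_{\beta,0}^-$ with the \emph{increasing} all-plus block forces the comparison $\rho>1-\underline\rho$, hence $\underline\rho>\tfrac12$. The sharp test (this is the content of Liggett--Steif, Theorem 1.2, which the paper invokes) is the \emph{decreasing} all-minus block: $\pi_\rho\preceq\nu_{\beta,0}^-$ forces $(1-\rho)^n\geq\nu_{\beta,0}^-(\eta=-1\text{ on }[-n,-1])=\nu_{\beta,0}^+(\eta=+1\text{ on }[-n,-1])$, whose decay rate is governed by $\bar\rho^+:=\nu_{\beta,0}^+(\eta_0=+\mid\eta_{-\infty}^{-1}=+)\geq\underline\rho$. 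Since (by subadditivity and the convergence of the one-sided conditionals) $\rho_{\max}^+=\underline\rho$, the existence of a separating $\rho$ is equivalent to $\underline\rho+\bar\rho^+>1$, i.e.\ to a strict inequality $\theta_-<\theta_+$ between the exponential rates of the all-plus and all-minus block probabilities under $\nu_{\beta,0}^+$ — a much weaker statement than $\underline\rho>\tfrac12$.

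The missing idea, which is the heart of the paper's proof, is how to get this strict rate separation for every $\beta>\beta_c$: the Edwards--Sokal coupling $\bP_{\beta,0}^+$ of the plus phase with the wired random-cluster measure. On the event that $[-n,-1]\times\{0\}$ is \emph{not} connected to infinity, the finite FK clusters are colored $\pm1$ symmetrically and independently, so
\begin{align}
\nu_{\beta,0}^+(\eta=-1\text{ on }[-n,-1])=\bP_{\beta,0}^+\bigl(\eta=+1\text{ on }[-n,-1]\times\{0\},\;\{[-n,-1]\leftrightarrow\infty\}^c\bigr)
\leq\nu_{\beta,0}^+(\eta=+1\text{ on }[-n,-1])\,\bP_{\beta,0}^+\bigl(\{[-n,-1]\leftrightarrow\infty\}^c\bigr),
\end{align}
(the last step using FKG-type positive correlation of alignment and connection). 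Planar duality then does the work your ``genuine contour/coupling estimate'' was supposed to do: for $\beta>\beta_c$ the dual model sits at $\beta^*<\beta_c$, where connectivities decay exponentially (Chayes--Chayes--Schonmann), so the disconnection probability is at most $Ce^{-cn}$ (Lemma \ref{lem exponential dec ay} in the paper, stated as Lemma \ref{lem exponential decay}), giving $\theta_+\geq\theta_-+c>\theta_-$ uniformly down to $\beta_c$. Your first half (one-sided sequential coupling showing $\pi_\rho\preceq\nu_{\beta,0}^+$ for $\rho\leq\underline\rho$, via the attractive $g$-function of Lemma \ref{lem ergodic}) is sound and in fact recovers the sharp threshold, but without the FK/duality input the second half — and hence the proposition — remains unproven exactly in the regime $\beta$ near $\beta_c$ where it matters.
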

The proof of Proposition \ref{prop domi} is deferred to the next subsection. 
Note that its statement contrasts \citet[Proposition 1.2]{LiggettSteifSD2006} which says that the analogous statement is not true for $\mu_{\beta,h}^+$ and $\mu_{\beta,h}^-$  (see also \citet{WarfheimerSDFuzzy2010} for further results in this direction). Based on Proposition \ref{prop domi}, an immediate application of
\cite[Theorem 1.2]{LiggettSteifSD2006} is the following:

\begin{cor}\label{cor domi}
Let $\beta \in (0,\infty)$ and $h\in\mathbb{R}$. Then both $\nu_{\beta,h}^{+,\pm}$ stochastically dominate $\pi_{\rho_{\max}^+}$, where 
$\rho_{\max}^{+} :=  \sup \{ \rho \in [0,1] \colon \pi_{\rho} \leq \nu_{\beta,h}^{+} \}$.
\end{cor}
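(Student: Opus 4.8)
The plan is to deduce the statement from \cite[Theorem 1.2]{LiggettSteifSD2006} once its hypotheses have been checked and the measures $\nu_{\beta,h}^{+,\pm}$ have been matched with the appropriate conditional limits. First I would observe that the set $R:=\{\rho\in[0,1]\colon \pi_\rho\leq \nu_{\beta,h}^+\}$ is a closed subinterval of $[0,1]$: it is an interval since $\rho\mapsto\pi_\rho$ is stochastically increasing, and it is closed since stochastic domination is preserved under weak limits while $\pi_\rho$ depends continuously (weakly) on $\rho$. Hence the supremum defining $\rho_{\max}^+$ is attained, so that $\pi_{\rho_{\max}^+}\leq\nu_{\beta,h}^+$. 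Because $\nu_{\beta,h}^+=\nu_{\beta,h}^{+,+}$ (as recorded in the proof of Lemma \ref{lem uniqueness}), this settles the $+$ case immediately, and by Lemma \ref{lem ergodic} we also have $\nu_{\beta,h}^{+,-}\leq\nu_{\beta,h}^{+,+}$, so only the lower bound $\pi_{\rho_{\max}^+}\leq\nu_{\beta,h}^{+,-}$ remains.

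For the $-$ case I would use that all the measures in play are translation-invariant and \emph{downward FKG}, a property implied by the strong FKG property of Definition \ref{def FKG} (conditioning on minuses is a special case of conditioning on an arbitrary finite configuration) and which passes to the conditional limits defining $\nu_{\beta,h}^{+,-}$ since positive association survives weak limits. These are exactly the hypotheses of \cite[Theorem 1.2]{LiggettSteifSD2006}, which characterizes the domination threshold of such a measure as $\rho_{\max}(\mu)=\lim_{\Lambda\uparrow\bZ}\mu(\eta_0=+\mid \eta=-\text{ on }\Lambda\setminus\{0\})$, the limiting single-site probability of a $+$ under all-minus conditioning on the complement of the site (the reduction to the all-minus boundary being a consequence of FKG monotonicity and Holley's criterion). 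The key point is then that this all-minus conditional coincides for $\nu_{\beta,h}^+$ and for $\nu_{\beta,h}^{+,-}$: conditioning $\nu_{\beta,h}^{+,-}$ on minuses on $\Lambda\setminus\{0\}$ amounts to conditioning $\nu_{\beta,h}^+$ on minuses in the far past \emph{and} on $\Lambda\setminus\{0\}$, whose joint limit over $\Lambda\uparrow\bZ$ is the all-minus-elsewhere conditional of $\nu_{\beta,h}^+$. Hence $\rho_{\max}(\nu_{\beta,h}^{+,-})=\rho_{\max}^+$, and Holley's criterion applied to $\nu_{\beta,h}^{+,-}$ yields $\pi_{\rho_{\max}^+}\leq\nu_{\beta,h}^{+,-}$.

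Combining the two cases gives the corollary; Proposition \ref{prop domi} enters only to ensure $\rho_{\max}^+>0$ when $\beta>\beta_c$, so that the conclusion is not vacuous (for $\rho_{\max}^+=0$ the measure $\pi_0$ is the all-minus point mass, dominated by everything). The main obstacle I anticipate is the bookkeeping around the hypotheses and limits rather than any genuinely new estimate: one must verify that downward FKG really survives both the projection onto the line and the iterated limits in $n$ and $l$ defining $\nu_{\beta,h}^{+,-}$, and one must justify interchanging the $\lim_n\lim_l$ of that definition with the limit $\Lambda\uparrow\bZ$ in the threshold characterization so that the two all-minus conditionals can be identified. Both steps rely only on monotonicity and the preservation of stochastic domination under limits, so I expect them to go through, but they are where the care is needed.
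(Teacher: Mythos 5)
Your overall architecture---reduce to the $-$ case, identify a conditional quantity of $\nu_{\beta,h}^{+,-}$ with the corresponding quantity of $\nu_{\beta,h}^{+}$ by merging the conditioning blocks, then invoke \cite[Theorem 1.2]{LiggettSteifSD2006}---is the right one, and your first paragraph (attainment of the supremum, $\nu_{\beta,h}^{+}=\nu_{\beta,h}^{+,+}$) is fine. The gap is in the statement you attribute to \cite[Theorem 1.2]{LiggettSteifSD2006}. That theorem is a \emph{block} criterion: for a translation-invariant downward FKG measure $\mu$ on $\{-1,+1\}^{\bZ}$, one has $\mu\geq\pi_\rho$ if and only if $\mu(\eta=-1 \text{ on } [1,n])\leq(1-\rho)^n$ for all $n$ (this is exactly how the paper uses it in the proof of Proposition \ref{prop domi}, where $\rho_{\max}^{\pm}=1-e^{-\theta_\pm}$ with $\theta_\pm$ the block rates). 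It does \emph{not} characterize $\rho_{\max}(\mu)$ as the two-sided limit $\lim_{\Lambda\uparrow\bZ}\mu(\eta_0=+\mid\eta=-\text{ on }\Lambda\setminus\{0\})$, and that characterization is false in general: take the stationary two-state Markov chain with $p(-,-)=p(-,+)=1/2$, $p(+,-)=1/10$, $p(+,+)=9/10$. It is strong FKG (since $p(-,-)p(+,+)\geq p(-,+)p(+,-)$), its minus-block probabilities are $\pi(-)\,p(-,-)^{n-1}=\tfrac13 2^{-n}\leq 2^{-n}$, so $\rho_{\max}=1/2$; yet $\mu(\eta_0=+\mid\eta_{-1}=\eta_{1}=-)=\frac{1/20}{1/20+1/4}=1/6$, and conditioning on further minuses only decreases this, so your formula returns a value $\leq 1/6<1/2$. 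Since the identification of the two-sided all-minus conditionals of $\nu_{\beta,h}^{+,-}$ and $\nu_{\beta,h}^{+}$ is the pivot of your argument, equality of those conditionals does not give equality of thresholds, and the proof as written does not close.

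The repair is to run your merging argument with \emph{one-sided} conditioning, feeding directly into the genuine block criterion. For a translation-invariant strong FKG measure, $c_k:=\nu_{\beta,h}^{+}(\eta_0=-1\mid\eta=-1\text{ on }[-k,-1])$ is nondecreasing in $k$ (downward FKG monotonicity), with limit $q$; the chain rule gives $\nu_{\beta,h}^{+}(\eta=-1\text{ on }[0,m-1])=\prod_{i=0}^{m-1}c_i$, so by Ces\`aro the block rate $\theta$ satisfies $e^{-\theta}=q$, and Theorem 1.2 of \cite{LiggettSteifSD2006} yields $\rho_{\max}^{+}=1-q$ with the supremum attained. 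Now estimate, using that filling the gap with minuses only increases the probability of the decreasing event:
\begin{align}
\nu_{\beta,h}^{+,-}\big(\eta=-1 \text{ on } [0,m-1]\big)
&=\lim_{n\to\infty}\lim_{l\to\infty}\nu_{\beta,h}^{+}\big(\eta=-1 \text{ on } [0,m-1] \mid \eta=-1 \text{ on } [-(n+l),-n]\big)\\
&\leq \limsup_{n,l\to\infty}\nu_{\beta,h}^{+}\big(\eta=-1 \text{ on } [0,m-1] \mid \eta=-1 \text{ on } [-(n+l),-1]\big)\\
&=\limsup_{n,l\to\infty}\prod_{i=0}^{m-1}c_{\,n+l+i}\;\leq\; q^m \;=\;\big(1-\rho_{\max}^{+}\big)^m.
\end{align}
Since $\nu_{\beta,h}^{+,-}$ is translation invariant and inherits the downward FKG property (as you correctly argued, conditioning and weak limits preserve it), the block criterion applies to it and gives $\nu_{\beta,h}^{+,-}\geq\pi_{\rho_{\max}^{+}}$, completing the corollary. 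This one-sided route is, in essence, the ``immediate application'' the paper has in mind; note also that Proposition \ref{prop domi} is not actually needed for the corollary, not even to avoid vacuousness.
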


\begin{proof}[Proof of Theorem \ref{thm Schonmann $g$-measure}a)]
Let $\beta\in (0,\infty)$ and $h\in \mathbb{R}$, and define
\begin{align}\label{eq limit measure}
\mu(\cdot):=\lim_{n \rightarrow \infty} \mu_{\beta,h}^+( \cdot \mid \eta = -1 \text{ on } (-\infty,-n] \times\{0\} ).
\end{align}
By the strong FKG property it follows that this limit exists. 
Furthermore, the measure $\mu$ is consistent with respect to the Ising model on $\bZ^2$ with parameters $\beta$ and $h$. Consequently, if $h\neq 0$ or $\beta\leq\beta_c$, we conclude that  $\mu = \mu_{\beta,h}^+$. In particular, we have that $\nu_{\beta_c,0}^{+,-}=\nu_{\beta_c,0}^{+,+}$.

Similarly, for $h=0$ and $\beta>\beta_c$, by Corollary \ref{cor domi}, the projection of $\mu$ onto  $\bZ\times \{0\}$ stochastically dominates $\pi_{\rho_{\max}^{+}}$. Since, by the Aizenman-Higuchi theorem, we have that $\mu= \alpha\mu_{\beta}^+ + (1-\alpha)\mu_{\beta}^-$ for some $\alpha \in [0,1]$, it thus follows, by applying Proposition \ref{prop domi}, that $\mu = \mu_{\beta,h}^+$. In particular, we have that $\nu_{\beta,0}^{+,-}=\nu_{\beta,0}^{+,+}$ for all $\beta\geq\beta_c$. Hence, by Lemma \ref{lem ergodic}c) and d), we conclude the proof of Theorem \ref{thm Schonmann $g$-measure}a).
\end{proof}

\subsection{Proof of Proposition \ref{prop domi}}

For $\beta$ large, Proposition \ref{prop domi} follows by a simple application of Peierls argument. For instance, by a slight modification of the proof in \cite{VelenikFriedliSM2017} (see page $112$-$114$ therein), one can show that, for $\beta>\ln3/2$,
\begin{align}
\nu_{\beta,0}^+( \eta = -1 \text{ on } [-n,-1]) \leq 1/2^n, \quad \text{ for all } n \text{ large}.
\end{align}
Consequently, by \cite[Theorem 1.2 ]{LiggettSteifSD2006}, for such $\beta$, the measure $\nu_{\beta,0}^+$ stochastically dominates $\pi_{1/2}$ which $\nu_{\beta,0}^-$ clearly cannot dominate (indeed, $\nu_{\beta,0}^-(\eta(0)=1)<1/2$ for all $\beta>\beta_c$).

We now continue with a proof of Proposition \ref{prop domi} which applies to all $\beta>\beta_c$. For this, we first briefly recall the random cluster representation of the Ising model and the so-called Edwards-Sokal coupling, following \citet[Chapter 6]{GeorgiiHaggstromMaes2001}:

Let $\bB$ be the set of all nearest-neighbor bonds in $\bZ^2$ and, for $\Lambda \subset \bZ^2$, let $\bB_{\Lambda}$ be the set of all edges of $\bB$ with at least one endpoint in $\Lambda$. Given $\eta\in \bB$ we say that $\Lambda \subset \bZ^2$ forms an $\eta$-open cluster if $\eta(e)=1$ for all $e \in \bB$ contained in $\Lambda$ and $\eta(e)=0$ for all $e=(x,y) \in \bB$ with $x\in \Lambda$, $y \in \Lambda^c$. Next, for $n\in \bN$, denote by $\phi_{p,n}^1$ the probability measure on $\{0,1\}^{\bB}$ in which each $\eta \in \{0,1\}^{\bB}$ is assigned a probability proportional to 
\begin{align}
\ind_{\{\eta = 1 \text{ off } \bB_{ [-n,n]^2}\}}\bigg\{ \prod_{e \in \bB_{[-n,n]^2}} p^{\eta(e)}(1-p)^{1-\eta(e)} \bigg\} 2^{k(\eta,n)+1},
\end{align}
where $k(\eta,n)$ is the number of all $\eta$-open clusters entirely contained in $[-n,n]^2$.  The measure $\phi_{p,n}^1$ is the random-cluster distribution in $[-n,n]^2$ with parameters $p$ and $2$ and wired boundary condition.
 
 Next, for $\beta>0$, let $\bP_{\beta,0}^{n,+}$ be the probability measure on $\{-1,+1\}^{ \Lambda} \times \{0,1\}^{\bB}$ corresponding to picking a random element of $\{-1,+1\}^{\Lambda} \times \{0,1\}^{\bB}$ according to the following two-step procedure:
\begin{enumerate}
\item Assign to each vertex of $\bZ^2\setminus[-n,n]^2$ value $1$, and to all edges of $\bB \setminus \bB_{[-n,n]^2}$ value $1$.
\item Assign to each vertex in $[-n,n]^2$ a spin value chosen from $\{-1,+1\}$ according to the uniform distribution, assign to each edge in $\bB_{[-n,n]^2}$ value $1$ or $0$ with respective probabilities $p$ and $ 1-p$, with $p = 1-e^{-2\beta}$, and do this independently for all vertices and edges.
\item Condition on the event that no two vertices with different spins have an edge with value $1$ connecting them. 
\end{enumerate}
The measure $\bP_{\beta,0}^{n,+}$ is known as the \emph{Edwards-Sokal coupling} of the random cluster measure on $[-n,n]^2$ and the Ising model. In particular, we have that the vertex and edge marginals of $\bP_{\beta,0}^{n,+}$ are $\mu_{\beta,0}^{[-n,n]^2,+}$ and $\phi_{p,n}^1$ respectively. We denote by $\bP_{\beta,0}^+$ the limiting measure of $\bP_{\beta,0}^{+,n}$ as $n\rightarrow \infty$. 

\begin{lem}\label{lem exponential decay}
For $\Lambda \subset \bZ^2$, write $\{ \Lambda \leftrightarrow \infty\}$ for the event \[\big\{\eta \in \{0,1\}^{\bB}\colon \text{ there is an } \infty \: \eta\text{-open cluster intersecting } \Lambda\big\}.\] 
Let $\beta>\beta_c$. Then there exist constants $C,c>0$ such that 
\begin{align}
\bP_{\beta,0}^{+} \Big( \big\{[-n,-1] \leftrightarrow \infty\big\}^c \Big) \leq Ce^{-cn}, \quad \forall \: n \in \bN.
\end{align}
\end{lem}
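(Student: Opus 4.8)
The plan is to work entirely at the level of the edge-marginal of $\bP_{\beta,0}^{+}$, since the event $\{[-n,-1]\leftrightarrow\infty\}$ depends only on the bond configuration $\eta$. This marginal is the infinite-volume wired random-cluster measure $\phi^{1}_{p}$ with $q=2$ and $p=1-e^{-2\beta}$, obtained as the limit of the $\phi^{1}_{p,n}$ introduced above. Because $\beta>\beta_{c}$, the identification of the critical point with the self-dual point for the planar random-cluster model (Beffara and Duminil-Copin) gives $p>p_{c}=p_{\mathrm{sd}}$, so $\phi^{1}_{p}$ is supercritical and almost surely has a unique infinite cluster. First I would pass to the dual lattice: by planar duality (see \cite[Chapter 6]{GeorgiiHaggstromMaes2001}) the dual of $\phi^{1}_{p}$ is the free random-cluster measure $\phi^{0}_{p^{*}}$ with the same $q=2$ and dual parameter $p^{*}$ determined by $\tfrac{p}{1-p}\tfrac{p^{*}}{1-p^{*}}=2$; since $p>p_{\mathrm{sd}}$ we get $p^{*}<p_{\mathrm{sd}}$, i.e.\ the dual model is strictly subcritical. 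The key analytic input is then sharpness of the phase transition: in the subcritical dual there is exponential decay of connectivities, so there are $c,C>0$ with $\phi^{0}_{p^{*}}(0\leftrightarrow\partial B_{k})\le Ce^{-ck}$ for all $k$, where $B_{k}=[-k,k]^{2}$.

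Next I would translate the event into the dual model. On $\{[-n,-1]\leftrightarrow\infty\}^{c}$ none of the $n$ vertices $(-j,0)$, $1\le j\le n$, lies in the infinite cluster, so each lies in a finite cluster. Consequently the (connected) segment is separated from infinity by closed primal edges, and by planar duality the outer boundary of the union of the finite clusters surrounding it is a single dual-open circuit $\pi^{*}$ enclosing the whole segment. Since $\pi^{*}$ has the segment, of horizontal extent $n$, on its inside and infinity on its outside, its diameter is at least $n$; in particular it contains a dual-open path of Euclidean length at least $n$.

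Finally I would bound $\phi^{1}_{p}$ of the existence of such a $\pi^{*}$. If the closest approach of $\pi^{*}$ to the right endpoint $(-1,0)$ is at distance $d$, then $\pi^{*}$ surrounds that point while staying at distance $\ge d$ from it, so its diameter is at least $\max(n,2d)$ and hence from any of its dual vertices it reaches dual-distance at least $\tfrac12\max(n,2d)$. By translation invariance and the exponential decay $\phi^{0}_{p^{*}}(0\leftrightarrow\partial B_{k})\le Ce^{-ck}$, together with a union bound over the polynomially-many admissible positions of $\pi^{*}$ at each distance $d$, one obtains $\phi^{1}_{p}([-n,-1]\not\leftrightarrow\infty)\le\sum_{d\ge0}\mathrm{poly}(d)\,Ce^{-c'\max(n,2d)}\le C''e^{-c''n}$, which is the claim. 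The main obstacle is precisely this geometric-duality step and the control of the union bound: one must argue that disconnection of the \emph{entire} segment—whose vertices may sit in several distinct finite clusters—nonetheless produces a single dual-open contour of diameter at least $n$, and that the Peierls-type entropy of its possible positions is dominated by the exponential weight. Both are furnished by the quantitative exponential decay in the subcritical dual, i.e.\ by sharpness of the planar random-cluster transition, and this is exactly what lets the estimate hold for all $\beta>\beta_{c}$ rather than only in the large-$\beta$ regime covered by the preceding Peierls bound.
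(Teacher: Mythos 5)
Your proposal is correct and takes essentially the same route as the paper: the paper's proof likewise passes by planar duality to the free dual random-cluster/Ising measure at $\beta^*<\beta_c$ (via the Kramers--Wannier relation $e^{-\beta^*}=\tanh(\tfrac12\beta)$, so no appeal to Beffara--Duminil-Copin is needed for $q=2$), rewrites $\{[-n,-1]\leftrightarrow\infty\}^c$ as the existence of an open dual circuit surrounding the segment, and concludes from exponential decay in the subcritical dual, citing Chayes, Chayes and Schonmann \cite{ChayesChayesSchonmann1987} for the details you carry out by hand. The geometric step you flag as the main obstacle is sound as you resolve it: the segment is lattice-connected, so the union of the (finite) clusters of its vertices is a finite connected set whose outer edge boundary consists entirely of closed primal edges and hence yields a single dual-open circuit of diameter at least $n$, after which your Peierls-type union bound matches the paper's appeal to exponential decay of dual correlations.
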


\begin{proof}
The statement follows by adapting the duality arguments presented in Chayes et al. \cite[Pages 437-440]{ChayesChayesSchonmann1987}. One makes use of the fact that if a given configuration $\eta$ is distributed according to the random cluster distribution $\phi_{p,n}^{1}$ with wired boundary conditions, then the dual configuration $\eta^{*}$ is also distributed according to a  "dual" random cluster measure $\Phi_{p^*,n}^{1,f}$   with free boundary conditions. Moreover, $p^*=\frac{2-2p}{2-p}$ and the inverse temperature of the dual system satisfies the Kramers-Wannier dual relation $e^{-\beta^*}=\tanh(\frac{1}{2}\beta)$. The most important remark here is that for $\beta>\beta_c$, one has that $\beta^*<\beta_c$.

Denoting by $\mathbb{P}_{\beta^*,0}^{*}$ the Edwards-Sokal coupling of the dual Ising model at inverse temperature $\beta^*$ with $\Phi_{p^*,n}^{1,f}$, we have that 
\begin{equation}
\mathbb{P}_{\beta,0}^{+}(\{[-n,-1]\leftrightarrow \infty\}^c)=\mathbb{P}_{\beta^*,0}^{*}(ODC[-n,-1]),
\end{equation}
where $ODC[1,n]$ denotes the event that there exists an open dual cluster surrounding $[1,n]$. This observation together with the exponential decay of correlations in the dual Ising model at inverse temperature $\beta^*$ (see discussion on page 439 in \cite{ChayesChayesSchonmann1987}) yield the desired conclusion. 
\end{proof}

\begin{proof}[Proof of Proposition \ref{prop domi}]
Recall that $\rho_{\max}^{\pm} = \sup \{ \rho \in [0,1] \colon \pi_{\rho} \leq \nu_{\beta}^{\pm} \}$.
By  \cite[Theorem 1.2 ]{LiggettSteifSD2006} and symmetry of the model, we have $\rho_{\max}^{\pm} = 1-e^{-\theta_{\pm}}$, where
\begin{align}\label{eq ldp limit}
\theta_{\pm}:= - \lim_{n \rightarrow \infty}n^{-1} \log \nu_{\beta,0}^{+} ( \eta = \mp1 \text{ on } [-n,-1] ),
\end{align}
and where the existence of the limit in \eqref{eq ldp limit} follows by standard sub-additivity arguments. For the proof of the proposition it is thus sufficient to show that $\theta_{-} < \theta_{+}$ for all $\beta>\beta_c$.

For $\beta$ large, the fact that $\theta_{+} >\theta_{-}$ follows easily by means of a Peierls argument. To show that this holds for all $\beta >\beta_c$, we make use of the Edwards-Sokal coupling of the Ising model and the random cluster measure. Firstly, we have that
\begin{align}\label{eq ES help}
\begin{split}
  \nu_{\beta,0}^{+} ( \eta &= -1 \text{ on } [-n,-1]    ) \\
&=    \mu_{\beta,0}^{+} ( \eta = -1 \text{ on } [-n,-1] \times \{0\} )
\\ &=   \bP_{\beta,0}^{+} ( \eta = -1 \text{ on } [-n,-1] \times \{0\}, \{[-n,-1] \leftrightarrow \infty \}^c) 
\\ &=  \bP_{\beta,0}^{+} ( \eta = +1 \text{ on } [-n,-1] \times \{0\}, \{[-n,-1] \leftrightarrow \infty\}^c ),
\end{split}
\end{align}
where the two first equalities follow merely by definition, and the last one follows since, in the Edwards-Sokal coupling, each finite cluster is colored $+1$ or $-1$ with equal probability, independently of the other clusters. On the other hand, we have that
\begin{align}
  \nu_{\beta,0}^{+} ( \eta &= +1 \text{ on } [-n,-1] ) \\ &=  \mu_{\beta,0}^{+} ( \eta = +1 \text{ on } [-n,-1] \times \{0\})
\\ &=  \bP_{\beta,0}^{+} ( \eta = +1 \text{ on } [-n,-1] \times \{0\}, \{[-n,-1] \leftrightarrow \infty \}) 
\\ &\quad+  \bP_{\beta,0}^{+} ( \eta = +1 \text{ on } [-n,-1] \times \{0\}, \{[-n,-1] \leftrightarrow \infty\}^c )
\\ &\geq  \mu_{\beta,0}^{+} ( \eta = +1 \text{ on } [-n,-1] \times \{0\} ) \hP_{\beta}^{+} (\{ [-n,-1] \leftrightarrow \infty \})
 \\ &\quad+  \mu_{\beta,0}^{+} ( \eta = -1 \text{ on } [-n,-1] \times \{0\} ),
\end{align}
where, in the last inequality, we used \eqref{eq ES help} and that 
\begin{align}
\bP_{\beta,0}^{+} ( \eta &= +1 \text{ on } [-n,-1] \times \{0\},\{ [-n,-1] \leftrightarrow \infty\} )
\\ \geq &\mu_{\beta,0}^{+} ( \eta = +1 \text{ on } [-n,-1] \times \{0\} ) \bP_{\beta,0}^{+} (\{ [-n,-1] \leftrightarrow \infty\} ).
\end{align}
The latter inequality holds since alignments in the Ising model increase the probability of bonds in the random cluster model, as seen by the basic Edwards-Sokal coupling (see  \citet[Corollary 6.4]{GeorgiiHaggstromMaes2001}). Hence, we obtain that,
\begin{align}
\nu_{\beta,0}^{+} ( \eta &= -1 \text{ on } [-n,-1] \times \{0\} ) \\ \leq  &\nu_{\beta,0}^{+} ( \eta = +1 \text{ on } [-n,-1] \times \{0\} )  \hP_{\beta}^{+} ( \{[-n,-1] \leftrightarrow \infty \}^c),
\end{align}
and thus, by Lemma \ref{lem exponential decay} we conclude that $\theta_{-} < \theta_{+}$, and hence also the proof of the proposition.
\end{proof}

\subsection{Proofs of Propositions \ref{prop unique}, \ref{prop not summable} and \ref{prop decimation}}\label{sec proofs prop}
The proofs of Propositions \ref{prop unique}, \ref{prop not summable} and \ref{prop decimation} all follow from already well established theory. For completeness, we present their arguments in this subsection.

\begin{proof}[Proof of Proposition \ref{prop unique}]
Consider the Ising model on $\bZ^2$ in the uniqueness regime.
\citet{SchonmannSchlosman1995} proved that this model is contained in the class of Gibbs models having complete analyticity as introduced in \citet{DobrushinShlosmanCAGF1985}. In particular, by Condition IIIc in \cite{DobrushinShlosmanCAGF1985}, the following holds: there are constants $C,c\in(0,\infty)$ such that for any $\Lambda \in \tilde{\cS}$ containing the origin and every $\sigma, \omega \in \tilde{\Omega}$ satisfying $\sigma_y = \omega_y$ for all $y \neq x$, $x \notin \Lambda$, we have that
\begin{align}\label{eq complete analyticity}
\Big|\mu_{\beta,h}^{\Lambda,\sigma}( \eta_{(0,0)}=+1) - \mu_{\beta,h}^{\Lambda,\omega}( \eta_{(0,0)}=+1)\Big|\leq Ce^{-c \norm{x}_1}.
\end{align}
Consider $\omega_{-\infty}^{-1} \in \Omega_{<0}$. For $n \in \bN$, let $\Lambda_n:= [-n,n]^2 \setminus ([-n,-1]\times\{0\})$ and set $\omega^{\pm} \in \tilde{\Omega}$ equal to $\omega_{-\infty}^{-1}$ on $[-n,-1]\times\{0\}$ and otherwise equal to $\pm$. Then, by the monotonicity of the Ising model, we have that 
\begin{align}\label{eq uniform bound DS}
\begin{split}
\nu_{\beta,h}^+(\eta_0 &=+ \mid +^{-(n+1)}_{-\infty} \omega^{-1}_{-n} )  - \nu_{\beta,h}^+(\eta_0=+ \mid -^{-(n+1)}_{-\infty} \omega^{-1}_{-n} )| 
\\ &\leq  \lim_{n \rightarrow \infty} |\mu_{\beta,h}^{\Lambda_n,\omega^+}(\eta_{(0,0)}=+1) - \mu_{\beta,h}^{\Lambda_n,\omega^-}(\eta_{(0,0)}=+1)| 
\\ &\leq \lim_{n \rightarrow \infty} 10n Ce^{-cn},
\end{split}
\end{align}
where the last inequality follows by \eqref{eq complete analyticity} and by telescoping along the boundary of $\Lambda_n$. Since the bound in \eqref{eq uniform bound DS} holds uniformly for all $\omega_{-\infty}^{-1} \in \Omega_{<0}$, it follows that the $g$-functions defined in \eqref{def particular $g$-functions} are equal. Thus, by Lemma \ref{lem uniqueness} b), we have that $\nu_{\beta,h}^+$ is the unique measure consistent with $P_{\beta,h}^{+,+}$ and that $P_{\beta,h}^{+,+}$ is regular. By this argument it also follows that $var_k(P_{\beta,h}^{+,+})$ decays exponentially in $k$ and is thus summable.
\end{proof} 

\begin{proof}[Proof of Proposition \ref{prop not summable}]
\cite{FernandezMaillardCCCandGibbs2004} gave sufficient conditions for a regular $g$-measure to be a Gibbs measure, and vice versa. For this, they considered a class of regular $g$-functions having a property which they named \emph{good future}, see \cite[Definition 4.5]{FernandezMaillardCCCandGibbs2004}. By \cite[Theorem 4.12]{FernandezMaillardCCCandGibbs2004}, $g$-measures consistent with a $g$-function having good future are Gibbs measures with respect to a continuous specification. Since $\nu_{\beta,0}^+$ is not a Gibbs measure whenever $\beta>\beta_c$, it thus follows that $\nu_{\beta,0}^+$ cannot be consistent with a $g$-function having good future. As noted in \cite[Remark 4.13]{FernandezMaillardCCCandGibbs2004}, this class contains the set of regular $g$-functions with summable variation. 
\end{proof}

\begin{proof}[Proof of Proposition \ref{prop decimation}]
The statement follows by a straightforward adaptation of \cite[Theorem 4.2]{MaesRedigMoffaertNGibbs1999}. 
 In particular, \cite{MaesRedigMoffaertNGibbs1999} proved that, for all $\beta$ large, and for $k\geq 4$, the measure $\nu_{1,k}$ is a Gibbs measure with exponentially decaying correlations. This statement can, by the exact the same argument, be extended to $\nu_{l,k}$ with $l >1$, by taking $k$ and $\beta$ sufficiently large. In particular, the measure satisfies the HUC condition of \cite{FernandezMaillardCCCandGibbs2004}. From this the statement of the proposition follows as a consequence of \cite[Theorem 4.16]{FernandezMaillardCCCandGibbs2004}.
 \end{proof}

\subsection{Proofs of Theorem \ref{thm Ising is g} and Theorem \ref{thm Schonmann $g$-measure}b)}
In the proof of Theorem \ref{thm Schonmann $g$-measure}a) we showed that
\begin{align}\label{eq local limit}
\lim_{n \rightarrow \infty} \mu_{\beta,h}^+( \cdot \mid \eta = -1 \text{ on } (-\infty,-n] \times\{0\} ) =  \mu_{\beta,h}^+(\cdot).
\end{align}
In this subsection we will strengthen this statement and prove Theorem \ref{thm Ising is g}. For this, the next lemma is of key importance. 

\begin{lem}\label{lem second AP}
For $n \in \nat$, let $A_n$ be the event that there exists an infinite nearest-neighbor path of $+$'s from $(n,0) \in \bZ^2$ which is entirely contained in $D_n:= \{ (x,y) \in \bZ^2 \colon  x\leq n \}$.  Then, for $\beta>\beta_c$ and for all $n$ large,
\begin{align}\label{eq agreement measure}
\mu_{\beta,0}^+(A_n  \mid \eta = -1 \text{ on } (-\infty,0) \times \{0\} ) >0.
\end{align} 
\end{lem}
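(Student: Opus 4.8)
The plan is to prove positivity by exhibiting, with positive probability, an infinite $+$-path that is confined to the vertical strip $T_n := \{(x,y)\in\bZ^2 : 0\le x\le n\}$. Note that $T_n\subseteq D_n$ and that $T_n$ is disjoint from the conditioned half-line $L^-:=\{(x,0)\colon x\le -1\}$. Writing $A_n^{T}\subseteq A_n$ for the event that there is an infinite $+$-path from $(n,0)$ contained in $T_n$, it suffices to bound $\mu_{\beta,0}^+(A_n^{T}\mid \eta=-1\text{ on }L^-)$ from below. The first step is to remove the infinite conditioning. Since $A_n^{T}$ is increasing and measurable with respect to the spins in $\{x\ge 0\}$, the spatial Markov property of the Ising model across the column $\{x=-1\}$ renders the part of $L^-$ lying in $\{x\le -2\}$ irrelevant once we condition on the spins of that column, while conditioning on the single remaining site $(-1,0)$ is absorbed into a boundary condition on $\{x=-1\}$ (one may first work in finite volume and pass to the monotone limit). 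Worst-casing this column to all $-1$ via the strong FKG property then yields
\[
\mu_{\beta,0}^+\big(A_n^{T}\mid \eta=-1\text{ on }L^-\big)\;\ge\;\mu_{\beta,0}^+\big(A_n^{T}\mid \eta=-1\text{ on }\{x=-1\}\big).
\]
The right-hand side is the probability, under the plus phase carrying a single $-$-wall along $\{x=-1\}$ (and unperturbed to the right), of a $+$-connection from $(n,0)$ to infinity inside $T_n$; crucially, $(n,0)$ now sits at the \emph{favorable} right edge, at distance $n$ from the wall and hence deep in the plus bulk.

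The second step is to show this confined connection has positive probability for all $\beta>\beta_c$ and all large $n$. For $\beta$ large this follows from a Peierls and finite-energy estimate. For the general case I would pass to the random-cluster representation and the Edwards--Sokal coupling $\bP_{\beta,0}^+$ used earlier, and reuse the duality input of Lemma \ref{lem exponential decay}: at $\beta>\beta_c$ the dual model is subcritical, $\beta^*<\beta_c$, so the dual cannot sustain a left-right crossing of the wide strip $T_n$, and consequently an open primal vertical crossing of $T_n$ exists with probability tending to $1$. Since the plus phase colors the infinite open cluster $+$, such a vertical crossing is an infinite $+$-path confined to $T_n$. Finally, as $(n,0)$ lies in the plus bulk, a standard finite-energy and FKG connection estimate attaches $(n,0)$ to this crossing inside $T_n$ with probability bounded away from $0$; combining the two positive contributions gives the desired lower bound and hence \eqref{eq agreement measure}.

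The main obstacle is the confinement to $D_n$ together with the requirement that the path emanate from the prescribed boundary point $(n,0)$: a soft appeal to bulk percolation does not suffice, because the infinite cluster of $\mu_{\beta,0}^+$ need not stay in $\{x\le n\}$. This is exactly what forces the strip-percolation argument through dual subcriticality, and it also explains why the claim is only made for $n$ large (the strip must be wide enough for the subcritical dual to fail to cross). The secondary delicate point is the uniform attachment of $(n,0)$: although it is far from the $-$-wall, it still lies on the boundary of the admissible region, so the connection estimate must be made uniformly in $n$ and, more importantly, uniformly over all $\beta>\beta_c$ — which is precisely where the random-cluster duality, rather than a crude Peierls bound, becomes indispensable.
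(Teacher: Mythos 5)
There is a genuine gap, and it is fatal to the core of your plan: the event $A_n^{T}$ you reduce to has probability \emph{zero}. For any finite $\beta$, the vertical strip $T_n=\{(x,y)\colon 0\le x\le n\}$ is a quasi-one-dimensional region: by the finite-energy property, the conditional probability that a given row $[0,n]\times\{k\}$ is entirely $-1$, given everything else, is bounded below by some $\delta(\beta,n)>0$ uniformly in the configuration elsewhere, so almost surely (under $\mu_{\beta,0}^+$, conditioned or not) there are all-minus rows at infinitely many heights $k>0$ and $k<0$. Any infinite path in $T_n$ starting at $(n,0)$ must cross such a row, so no infinite $+$-path confined to the strip exists, for any $\beta$ and any $n$. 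Your duality step is where this surfaces concretely: subcriticality of the dual ($\beta^*<\beta_c$) gives a \emph{finite} dual correlation length, which does not prevent dual left--right crossings of a width-$n$ strip --- such crossings have probability of order $e^{-cn}$ per unit of height and therefore occur infinitely often along the infinite strip; equivalently, there is no such thing as a "primal vertical crossing" of an infinite strip other than an infinite confined path, and that has probability zero. Note also that your first reduction locks you into this dead end: after using the Markov property to discard $\{x\le -2\}$ and worst-casing the column $\{x=-1\}$ to all minus, the region you retain is $\{x\ge 0\}$, whose intersection with the required region $D_n=\{x\le n\}$ is exactly the strip $T_n$. Since the lemma demands confinement to the half-plane $D_n$, which \emph{contains} the conditioned half-line, one cannot decouple the conditioning away; it must be handled inside $D_n$.

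The paper's proof does precisely that, and its mechanism is quite different from crossing estimates. The non-soft input is a single-site comparison: by \eqref{eq local limit} (which rests on the stochastic-domination Proposition \ref{prop domi}) and translation invariance, for all large $n$ one has $\mu_{\beta,0}^{+}(\eta(n,0)=1 \mid \eta=-1 \text{ on } (-\infty,0)\times\{0\}) > \mu_{\beta,0}^{-}(\eta(n,0)=1)$, while strong FKG gives $\mu_{\beta,0}^{+}(\eta(n,0)=1 \mid \eta=-1 \text{ on } \partial\Gamma) \le \mu_{\beta,0}^{-}(\eta(n,0)=1)$ for every finite connected $\Gamma\ni(n,0)$. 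These are exactly the hypotheses of \cite[Theorem 8.1]{GeorgiiHaggstromMaes2001}, yielding an infinite $+$-path through $(n,0)$ under the conditioned (Markovian) measure. Confinement to the half-plane $D_n$ --- genuinely two-dimensional, unlike your strip --- is then achieved by the reflection argument of \cite[Proposition 8.3]{GeorgiiHaggstromMaes2001}: pair each site $x\in D_n$ with its mirror image $rx$ across the boundary of $D_n$, work with $f(\eta)=\eta(n,0)+\eta(n+1,0)$, and use strong FKG plus the $\pm$ symmetry to verify the boundary hypothesis for the paired process, producing an infinite $+$-path $\gamma\subset D_n$ from $(n,0)$ whose reflection is also $+$. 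If you want to repair your write-up, replace the strip-percolation step by this half-plane reflection scheme; the random-cluster duality you invoke is used in the paper only for Lemma \ref{lem exponential decay} and Proposition \ref{prop domi}, i.e.\ upstream of this lemma, not for the confinement itself.
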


\begin{proof}
Let $\beta>\beta_c$. Note that the measure on the left hand side of \eqref{eq agreement measure} is Markovian. Further we remark that, by  \eqref{eq local limit} and translation invariance, for all $n$ large, we have that
\begin{align}
&\mu_{\beta,0}^{+}\big( \eta(n,0)=1 \mid \eta = -1 \text{ on } (-\infty,0) \times \{0\} \big) > \mu_{\beta,0}^{-}\big(\eta(n,0)=1\big),
\end{align}
and that, by the strong FKG property, for any finite connected set $\Gamma$ containing $(n,0)$,
\begin{align} 
\mu_{\beta,0}^{+}\big(\eta(n,0)=1 \mid \eta = -1 \text{ on } \partial \Gamma\big) \leq \mu_{\beta,0}^{-}\big(\eta(n,0)=1\big),\end{align} 
where $\partial \Gamma$ is the outer boundary of $\Gamma$. In particular, all the assumptions of \cite[Theorem 8.1]{GeorgiiHaggstromMaes2001} are satisfied, from which we conclude that with positive probability there is an infinite  nearest-neighbor path of $+$'s which contains $(n,0)$. By the finite-energy property of the Ising model this immediately extends to any $x \in \bZ^2 \setminus  [(-\infty,0) \times \{0\}]$.

Next, we want to improve this and show that, with positive probability, there is such an infinite  nearest-neighbor path of $+$'s which contains $(n,0)$ and which is contained in $D_n$. For this, we need a slight modification of the proof of \cite[Proposition 8.3]{GeorgiiHaggstromMaes2001}, detailed as follows: Let $n$ be as above and note that also 
\[ \mu_{\beta,0}^+\big(\eta(n+1,0)=1  \mid \eta = -1 \text{ on } (-\infty,0) \times \{0\}\big) >\mu_{\beta,0}^{-}\big(\eta(n+1,0)=1\big).\]
Next, identify each $\xi \in \tilde{\Omega}$ with $(\xi(x),\xi(rx))_{x\in D_n} \in S^{D_n}$, where $S=\{-1,+1\}^2$ and $rx$ is the reflection of $x$ along the the border of $D_n$. 
Let $f\colon \tilde{\Omega} \rightarrow \bR$ be the function given by $f(\eta)=\eta(n,0) + \eta(n+1,0)$. Then, by our choice of $n$, we have that 
\begin{align}
\mu_{\beta,0}^+\big(f \mid \eta = -1 \text{ on }  (-\infty,0) \times \{0\} \big)>0.
\end{align} 
On the other hand, let $\Gamma \subset D_n$ be a finite set containing $(n,0)$ and $\tilde{\Gamma}= \Gamma \cup r\Gamma$. 
If $(\xi,\xi') \in S^{D_n}$ with $(\xi,\xi')\neq (+1,+1)$ on $\partial \Gamma \cap D_n$, then $\xi'(x) \leq - \xi(x)$ for each $\partial \Gamma \cap D_n$. 
Therefore, $(\xi,\xi') \leq (\xi,-\xi)$ (as elements of $\tilde{\Omega}$) on $\partial \tilde{\Gamma} = \partial_{D_n}\Gamma \cup r\partial_{D_n}\Gamma$. We can thus write, for any such $(\xi,\xi')$ satisfying that $\xi(x) = -1 \text{ on } \partial \Gamma \cap (-\infty,0) \times \{0\}$,
\begin{align}
\mu&_{\beta,0}^+\big(f \mid (\eta,\eta')=(\xi,\xi') \text{ on } \partial \Gamma \cap D_n, \xi(x) = -1 \text{ on } (-\infty,0) \times \{0\} \big) \qquad
\\ \leq &\mu_{\beta,0}^+\big(\eta(n,0) \mid  \eta = (\xi,\xi') \text{ on }{\tilde{\Gamma}} \big) 
 + \mu_{\beta,0}^+\big(\eta(n+1,0) \mid \eta = (\xi,\xi') \text{ on }{\tilde{\Gamma}}\big) 
\\ \leq & \mu_{\beta,0}^+\big(\eta(n,0) \mid  \eta = (\xi,-\xi) \text{ on }{\tilde{\Gamma}}\big) + \mu_{\beta,0}^+\big(\eta(n+1,0) \mid \eta = (\xi,-\xi) \text{ on }{\tilde{\Gamma}}\big),
\end{align}
which equals $0$ by the strong FKG property and by symmetry of the model. Consequently, all the assumptions of \cite[Theorem 8.1]{GeorgiiHaggstromMaes2001} are satisfied and hence, with positive probability, there exists an infinite nearest-neighbor path $\gamma$ contained in $D_n$ and starting from $(n,0)$ such that all spins along $\gamma$ and its reflection image $r\gamma$ are $+$. This implies the statement of the lemma by another application of the FKG property.
\end{proof}

\begin{lem}
Let $\beta>\beta_c$. The measure $\mu_{\beta,0}^{+}(\cdot \mid \eta= -1 \text{ on } (\infty,-1] \times \{0\})$ is trivial on the tail-$\sigma$-algebra.
\end{lem}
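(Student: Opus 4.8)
Write $S:=(-\infty,-1]\times\{0\}\subset\bZ^2$ for the pinned wall and set $\nu:=\mu_{\beta,0}^{+}\big(\cdot \mid \eta = -1 \text{ on } S\big)$, which by the strong FKG property is the decreasing limit of $\mu_{\beta,0}^{+}(\cdot \mid \eta = -1 \text{ on } [-n,-1]\times\{0\})$ as $n\to\infty$ and is therefore well defined. My plan is to realise $\nu$ as the maximal element of the Gibbs measures for a monotone \emph{constrained} specification, and then invoke the classical equivalence between extremality and triviality on the tail $\sigma$-algebra. To this end, for $\Lambda\in\tilde\cS$ and $\omega\in\tilde\Omega$ with $\omega=-1$ on $S$, define the kernel
\begin{equation}
\gamma^{S}_{\Lambda}(\cdot\mid\omega):=\mu_{\beta,0}^{\Lambda,\omega}\big(\cdot \mid \eta=-1 \text{ on } \Lambda\cap S\big),
\end{equation}
that is, the finite-volume Ising kernel with the spins on $\Lambda\cap S$ frozen to $-1$. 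One checks that $(\gamma^{S}_{\Lambda})_{\Lambda\in\tilde\cS}$ is a proper, monotone specification on the closed set $\{\eta=-1\text{ on }S\}$ — monotonicity being inherited from the attractiveness of the Ising model, since freezing spins to a fixed value preserves the lattice condition — and that $\nu$ is consistent with it.

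First I would show that $\nu$ is the maximal Gibbs measure for $\gamma^{S}$. By monotonicity of the specification, for any measure $\rho$ consistent with $\gamma^{S}$ and any increasing cylinder function $f$ one has $\rho(f)\leq\gamma^{S}_{\Lambda}(f\mid +)$, so letting $\Lambda\uparrow\bZ^{2}$ shows that the decreasing limit $\nu^{+}_{S}:=\lim_{\Lambda}\gamma^{S}_{\Lambda}(\cdot\mid +)$ dominates every element of $\mathcal{G}(\gamma^{S})$. Since the approximating measures are monotone both in the volume $\Lambda$ and in the truncation level $n$, the double limit defining $\nu$ may be taken in either order, identifying $\nu$ with $\nu^{+}_{S}$. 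Maximality then forces extremality: if $\nu=\alpha\rho_{1}+(1-\alpha)\rho_{2}$ with $\rho_{1},\rho_{2}\in\mathcal{G}(\gamma^{S})$ and $0<\alpha<1$, then $\rho_{i}\leq\nu$, whence for every increasing cylinder event $B$ we have $\nu(B)=\alpha\rho_{1}(B)+(1-\alpha)\rho_{2}(B)\leq\nu(B)$, forcing $\rho_{1}(B)=\rho_{2}(B)=\nu(B)$; as increasing cylinder events are measure-determining, $\rho_{1}=\rho_{2}=\nu$. Finally, by the classical equivalence between extremality in $\mathcal{G}(\gamma^{S})$ and triviality on the tail $\sigma$-algebra, see \cite[Theorem 7.7]{Georgii2011}, the measure $\nu$ is trivial on the tail.

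The delicate point is the bookkeeping forced by the \emph{infinite} constraint $S$: one must check that freezing infinitely many spins still defines a genuine monotone specification on the restricted configuration space, and, more importantly, justify the interchange of the $\Lambda\uparrow\bZ^{2}$ limit with the $n\to\infty$ limit used to identify $\nu$ with the maximal constrained measure $\nu^{+}_{S}$. Both issues are resolved by the strong FKG property of $\mu_{\beta,0}^{+}$, which guarantees that conditioning on the decreasing events $\{\eta=-1\text{ on }\Lambda\cap S\}$ preserves both stochastic domination and monotone convergence; once this is in place, the remaining steps are the standard extremality/tail-triviality dictionary for monotone specifications, applied verbatim on $\{\eta=-1\text{ on }S\}$.
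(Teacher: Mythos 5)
Your argument is correct and is, in substance, the same one the paper invokes: the paper's proof is a one-line deferral to the proof of Lemma~1 in \cite{SchonmannNGibbs1989}, whose tail-triviality step is precisely your chain ``monotone (FKG) limit of conditioned measures $=$ maximal Gibbs measure of the constrained specification $\Rightarrow$ extremal $\Rightarrow$ tail-trivial'' (with the identification of the conditioned limit as the maximal constrained measure being the global-specification construction of \cite{FernandezPfisterNGibbs1997}, which the paper also uses). So the proposal is a correctly fleshed-out version of the paper's cited proof, not a genuinely different route.
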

\begin{proof}
This follows by the same proof as the proof of \cite[Lemma 1]{SchonmannNGibbs1989}.
\end{proof}

As an immediate consequence of the two previous lemmas, we have the following corollary:

\begin{cor}\label{cor agreement}
Let $\beta>\beta_c$ and let $B_m := \cup_{n=1}^m A_n$, where $A_n$ is the event defined in Lemma \ref{lem second AP}. Then $\mu_{\beta,0}^+( B_n \mid \eta = -1 \text{ on } (-\infty,0)\times \{0\} ) \uparrow 1$ as $n \rightarrow \infty$.
\end{cor}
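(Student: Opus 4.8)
The plan is to reduce the statement to a tail zero--one law. Put $B_\infty:=\bigcup_{n\ge 1}A_n$ and abbreviate the conditioned measure by $\mu:=\mu_{\beta,0}^+\bigl(\,\cdot\mid \eta=-1\text{ on }(-\infty,0)\times\{0\}\bigr)$. Since $(B_m)_{m\ge1}$ increases to $B_\infty$, continuity of $\mu$ from below shows that $\mu(B_m)$ is non-decreasing with $\mu(B_m)\uparrow\mu(B_\infty)$; hence the limit exists and it remains only to prove $\mu(B_\infty)=1$. Moreover $\mu(B_\infty)>0$ is immediate from Lemma~\ref{lem second AP}, since $B_\infty\supseteq A_n$ and $\mu(A_n)>0$ for all large $n$.

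The tempting route is to argue that $B_\infty$ is a tail event and then invoke the preceding lemma, which says $\mu$ is trivial on the tail $\sigma$-algebra, to conclude $\mu(B_\infty)\in\{0,1\}$ and therefore $\mu(B_\infty)=1$. This route needs care: $B_\infty$ is \emph{not} itself tail-measurable, because the defining event $A_n$ ties the anchor $(n,0)$ on the axis to the containing half-plane $D_n$, and flipping a single spin adjacent to the anchor can destroy every witnessing path at once without allowing a re-anchoring at another axis site. I would therefore first pass to the auxiliary events $U_n:=\{\text{there is an infinite }+\text{-path contained in }D_n\}$. These \emph{are} tail-measurable: altering the spins in a finite set $F$ affects only a finite initial segment of any infinite $+$-path contained in the fixed half-plane $D_n$, and the remaining tail is again an infinite $+$-path inside $D_n$. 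Since $U_n\supseteq A_n$ gives $\mu(U_n)\ge\mu(A_n)>0$ for large $n$, triviality of the tail $\sigma$-algebra yields $\mu(U_n)=1$ for all large $n$.

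It then remains to transfer full measure from $U_n$ to $B_\infty$: to upgrade the unanchored infinite $+$-path guaranteed on $U_n$ to one emanating from an axis site $(m,0)$ and contained in $D_m$. This is the main obstacle. I would approach it through the finite-energy property of the Ising model together with the strong FKG property (Definition~\ref{def FKG}): conditionally on the configuration away from a finite window, a prescribed $+$-connection from the relevant cluster down to an axis site has probability bounded below, and one exploits this, together with the abundance of such windows along the infinite cluster, to show that $\mu$-a.e.\ configuration in $U_n$ in fact realizes some $A_m$. Making this reconnection precise while keeping the connecting path inside the appropriate half-plane $D_m$ is the delicate point; granting it, the monotone-limit and zero--one-law steps above are routine and yield $\mu(B_\infty)=1$, which is the assertion of the corollary.
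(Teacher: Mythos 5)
Your skeleton --- reduce to $\mu(B_\infty)=1$ for $B_\infty:=\bigcup_{n\ge1}A_n$ by monotone convergence, get positivity from Lemma~\ref{lem second AP}, and invoke triviality of the tail $\sigma$-algebra from the preceding lemma --- is exactly the argument the paper intends; the paper itself gives no more detail than ``immediate consequence of the two previous lemmas''. Moreover, your objection that $B_\infty$ is \emph{not} literally tail-measurable is correct and is a real point the paper glosses over: in a configuration whose only plus on the axis is the anchor $(n,0)$, flipping that single spin exits every $A_m$ simultaneously, exactly as you say. Your substitute events $U_n$ (existence of an \emph{unanchored} infinite $+$-path in $D_n$) are genuinely tail, increasing in $n$, and satisfy $\mu(U_n)\ge\mu(A_n)>0$ for large $n$, whence $\mu(U_n)=1$. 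Up to this point your write-up is sound, and in fact more careful than the paper's.

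The genuine gap is the step you explicitly ``grant'': transferring full measure from $U_n$ to $B_\infty$. As sketched, finite energy plus strong FKG only yields that each $A_m$ has \emph{positive} probability --- which you already had from Lemma~\ref{lem second AP} --- not that the union has probability one, so the proposal as written does not prove the corollary. The step does close, but by a different mechanism than the one you gesture at: a density-point (L\'evy zero--one law) argument. If $N:=U_n\setminus B_\infty$ had $\mu(N)>0$, write $U_n=\bigcup_k V_k$ with $V_k$ the event that some infinite $+$-path in $D_n$ meets $[-k,k]^2$, pick $k$ with $\mu(N\cap V_k)>0$, and take a point where the conditional probability of $N$ given the configuration off a suitable finite set $R$ is close to $1$. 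Forcing all spins of $R$ to $+1$ is a monotone local modification with conditional probability bounded below uniformly (conditional finite energy), it preserves the witnessing path (raising spins never destroys $+$-paths), and with $R$ a connected finite set containing $(m,0)$ and surrounding $[-k,k]^2$ it lands the configuration in $A_m\subseteq B_\infty$ while remaining in a set where $N$ retains conditional mass near $1$ --- contradicting $N\cap B_\infty=\emptyset$. Two details your sketch misses: first, the forced set $R$ must avoid $(-\infty,0)\times\{0\}$, since under the conditioned measure those spins are $-1$ almost surely and insertion there has probability zero; this is arrangeable because the witnessing path a.s.\ avoids that half-line and $m$ is free to be large. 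Second, the point you single out as delicate --- keeping the connection inside $D_m$ --- is actually the harmless part: the path lies in $D_n\subset D_m$ and one may choose $R\subset D_m$ outright once $m>\max(n,k)$; the half-plane containment was the hard content of Lemma~\ref{lem second AP}, not of this corollary.
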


\begin{proof}[Proof of Theorem \ref{thm Ising is g}]
Recall the definitions of $C_{\theta}$ and $C_{\theta,n}$ given just before the statement of Theorem \ref{thm Ising is g}. Let $\beta>\beta_c$, and let $\hat{\theta} \in (0,\infty)$ be such that, for some constants $C>0$, we have
\begin{align}\label{eq convergence to plus estimates}
\Big| \mu_{\beta,0}^+ \big(\eta(0,0)=+1\big) -  \mu_{\beta,0}^{[-n,n]^2,+}\big(\eta(0,0)=+1\big)\Big| \leq Ce^{-\hat{\theta} n}.
\end{align}
The existence of such $\hat{\theta}$ is standard; see e.g.\ \cite[Theorem 3.62]{VelenikFriedliSM2017}. Next, let $A^{-1}_{-\infty} := \big\{  \eta = -1 \text{ on } (-\infty,0)\times \{0\}\big\}$ and $B^{-1}_{-\infty} :=\big\{  \eta = +1 \text{ on } (-\infty,0)\times \{0\} \big\}$ and note that, by the strong FKG property, 
for any $A\in \tilde{\cF}_{<0}$ we have
\begin{align}
\mu_{\beta,0}^+\big(\cdot \mid A^{-1}_{-\infty}\big) \leq \mu_{\beta,0}^+\big(\cdot \mid A\big) \leq \mu_{\beta,0}^+\big(\cdot \mid B^{-1}_{-\infty}\big),
\end{align}
where $\tilde{\cF}_{<0}$ is the $\sigma$-algebra on $\tilde{\Omega}$ concentrating on events on $(-\infty,0)\times \{0\}$. 
Therefore, it is sufficient for us to show that 
\[ \lim_{n \rightarrow \infty} \sup_{B \in \tilde{\cF}_{C_{\theta,n}}}\Big| \mu_{\beta,0}^+\big(B\mid A_{-\infty}^{-1}\big)-\mu_{\beta,0}^+\big(B \mid B_{-\infty}^{-1}\big)\Big|=0,\]
in order to prove the statement of the theorem. For this, we first note that
\begin{align}\label{eq help me help me}
\begin{split}
\big| \mu_{\beta,0}^+\big(&\cdot \mid A_{-\infty}^{-1}\big)-\mu_{\beta,0}^+\big(\cdot\mid B_{-\infty}^{-1}\big)\big|
\\ \leq &\mu_{\beta,0}^+\big(B_{n/2} \mid A_{-\infty}^{-1}\big) \big|\mu_{\beta,0}^+\big(\cdot \mid A_{-\infty}^{-1}, B_{n/2}\big) - \mu_{\beta,0}^+\big(\cdot \mid B_{-\infty}^{-1}\big)\big|
\\ + &\mu_{\beta,0}^+\big(B_{n/2}^c \mid A_{-\infty}^{-1}\big) \big|\mu_{\beta,0}^+\big(\cdot \mid A_{-\infty}^{-1}, B_{n/2}^c\big) - \mu_{\beta,0}^+\big(\cdot \mid B_{-\infty}^{-1}\big)\big|.
\end{split}
\end{align}
By Corollary \ref{cor agreement}, the second term of \eqref{eq help me help me} goes to $0$ as $n\rightarrow \infty$. Hence, we will focus on the first term: by the Markov property, we have that
\[ \mu_{\beta,0}^+(\cdot \mid B_{-\infty}^{-1}) \leq \mu_{\beta,0}^+(\cdot \mid A_{-\infty}^{-1}, B_{n/2}) \leq  \mu_{\beta,0}^+(\cdot \mid \eta = +1 \text{ on } \{n/2\} \times \bZ ).\]
Thus, by Strassen's Theorem, there exists a coupling $\hP_{0,1}$ of $\mu_{\beta,0}^+(\cdot \mid B_{-\infty}^{-1})$ and $\mu_{\beta,0}^+(\cdot\mid A_{-\infty}^{-1}, B_{n/2}) $ such that $\hP_{0,1} \left(\eta^1 \leq \eta^2 \right) =1$.
In particular, for any $B \in \tilde{\mathcal{F}}_{C_{\theta,n}}$, $n \in \bN$, and with $\theta < \hat{\theta}$, we have that
\begin{align}
\big| \mu_{\beta,0}^+&(B\mid B_{-\infty}^{-1}) - \mu_{\beta,0}^+(B\mid A_{-\infty}^{-1}, B_{n/2} ) \big| 
\\ \leq &\hP_{0,1} \left( \eta^1 \neq \eta^2 \text{ on } C_{\theta,n} \right)
 \\ \leq &\sum_{ x \in C_{\theta,n}} \hP_{0,1} \left( \eta^1(x) \neq \eta^2(x) \right) 
\\ =  &\sum_{ x \in C_{\theta,n}} \hP_{0,1} \left( \eta^1(x)=0, \eta^2(x)=1 \right) 
\\  =  &\sum_{ x \in C_{\theta,n}} \left( \hP_{0,1} \left( \eta^1(x)=0 \right) - \hP\left( \eta^2(x)=0 \right) \right)
\\  = &\sum_{ x \in C_{\theta,n}} \big| \mu_{\beta,0}^+\big(\eta(x)=1\mid A_{-\infty}^{-1}, B_{n/2} \big) - \mu_{\beta,0}^+\big(\eta(x)=1 \mid B_{-\infty}^{-1}\big) \big|
\\ \leq &\sum_{ x \in C_{\theta,n}} \big| \mu_{\beta,0}^+\big(\eta(x)=1\mid  \eta = 1 \text{ on } \{n/2\} \times \bZ\big) - \mu_{\beta,0}^+\big(\eta(x)=1\big) \big|.
\end{align}
Each term inside the latter sum is bounded by \eqref{eq convergence to plus estimates}. Thus, since $\theta <\hat{\theta}$, the sum goes to $0$ as $n \rightarrow \infty$ and this completes the proof of the theorem.
\end{proof}

\begin{proof}[Proof of Theorem  \ref{thm Schonmann $g$-measure}b)]
This is an immediate consequence of Theorem \ref{thm Ising is g}.
\end{proof}

\subsection*{Acknowledgement}
The authors would like to thank Aernout van Enter for stimulating discussions and valuable comments. S.A. Bethuelsen acknowledges the DFG, project GA582/7-2.

\end{document}